\documentclass[11pt]{amsart}

\usepackage{amscd}
\usepackage{amsmath, amssymb,color,stmaryrd}
\usepackage{amsfonts, mathrsfs}
\usepackage{cancel}
\usepackage{comment}

\usepackage[pdftex,hyperindex]{hyperref}
\usepackage[backend=biber,maxnames=5,minnames=5,minbibnames=5,maxbibnames=5,maxalphanames=5,giveninits=true,doi=false,url=false,isbn=false,style=alphabetic,date=year,sorting=nyt]{biblatex}
\renewbibmacro{in:}{} 

\usepackage{geometry}
\geometry{centering,vcentering,asymmetric,marginratio=1:1,vscale=0.74,hscale=0.68}

\addbibresource{Hermite-Einstein.bib}

\newcommand{\dbar}{\overline{\partial}}

\newcommand{\p}{\partial}
\newcommand{\pbar}{\ov{\partial}}

\newcommand{\abs}[1]{\left\lvert#1\right\rvert}
\newcommand{\norm}[1]{\left\lVert#1\right\rVert}

\newcommand{\ov}[1]{\overline{#1}}
\newcommand{\ul}[1]{\underline{#1}}

\newcommand{\tr}[2]{\textrm{tr}_{#1}{#2}}
\newcommand{\ti}[1]{\widetilde{#1}}

\newcommand{\e}{\varepsilon}

\newcommand{\R}{\mathbb{R}}
\newcommand{\C}{\mathbb{C}}

\renewcommand{\leq}{\leqslant}
\renewcommand{\geq}{\geqslant}
\renewcommand{\le}{\leqslant}
\renewcommand{\ge}{\geqslant}

\newcommand{\be}{\begin{equation}}
\newcommand{\ee}{\end{equation}}

\newcommand{\Hom}{\mathrm{Hom}}

\newcommand{\wk}{\rightharpoonup}{}

\begin{document}
\newtheorem*{thmA}{Theorem A} 
\newtheorem*{thmAp}{Theorem A'} 
\newtheorem*{thmB}{Theorem B} 
\newtheorem*{thmBp}{Theorem B'} 
\newtheorem*{thmC}{Theorem C} 
\newtheorem{claim}{Claim}
\newtheorem{theorem}{Theorem}[section]
\newtheorem{lemma}[theorem]{Lemma}
\newtheorem{corollary}[theorem]{Corollary}
\newtheorem{proposition}[theorem]{Proposition}
\newtheorem{question}{question}[section]
\theoremstyle{definition}
\newtheorem{definition}[theorem]{Definition}
\newtheorem{remark}[theorem]{Remark}

\numberwithin{equation}{section}

\title[Geodesic rays]{Geodesic Rays in the Donaldson--Uhlenbeck--Yau Theorem}

\begin{abstract}
We give new proofs of two implications in the Donaldson--Uhlenbeck--Yau theorem. Our proofs are based on geodesic rays of Hermitian metrics, inspired by recent work on the Yau--Tian--Donaldson conjecture.
\end{abstract}

\author{Mattias Jonsson \and Nicholas McCleerey \and Sanal Shivaprasad}

\address{Dept of Mathematics\\
  University of Michigan\\
  Ann Arbor, MI 48109-1043}
\email{mattiasj@umich.edu,njmc@umich.edu,sanal@umich.edu}

\subjclass[2020]{Primary: 53C07, Secondary: 14J60, 32Q15, 58E15}

\maketitle

\setcounter{tocdepth}{1}


\section{Introduction}


 Let $(X^n, \omega)$ be a compact K\"ahler manifold,
 and $E$ a holomorphic vector bundle of rank $r$ over $X$. A \emph{Hermite-Einstein metric} on $E$ is a Hermitian metric $h$ which satisfies
  \[
   \Theta(h) \wedge \omega^{n-1} = \gamma\, \omega^{n} \mathrm{Id}_{E},
 \]
 where $\Theta(h)$ is the curvature of $h$, $\mathrm{Id}_E$ is the identity endomorphism, and $\gamma$ is a cohomological constant.

The celebrated Donaldson--Uhlenbeck--Yau theorem states that $E$ admits a Hermite-Einstein metric if and only if $E$ is slope stable \cite{Don87,UY86}. We will consider the following version:
\begin{theorem}\label{DUY}
 
Suppose that $E$ is a
holomorphic vector bundle over a compact K\"ahler manifold $(X^n, \omega)$. Then the following conditions are equivalent:
\begin{enumerate}
\item $E$ is slope stable;
\item The Donaldson functional $\mathcal{M}$ is proper on the space of Hermitian metrics; 
\item $E$ admits a unique Hermite-Einstein metric.
\end{enumerate}
\end{theorem}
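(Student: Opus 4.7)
The plan is to establish the cyclic implications $(3) \Rightarrow (1) \Rightarrow (2) \Rightarrow (3)$, with the last two being the ``new'' directions proved via geodesic rays of Hermitian metrics.

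For $(3) \Rightarrow (1)$, I would run the classical Kobayashi--L\"ubke argument: given a Hermite--Einstein metric $h$ and any coherent subsheaf $F \subset E$, the $h$-orthogonal projection onto $F$ is a weakly holomorphic endomorphism whose integrated pairing with the curvature---after integration by parts and use of the Hermite--Einstein equation---yields $\mu(F) \leq \mu(E)$, with equality forcing $F$ to split off as a holomorphic summand. Uniqueness of $h$ modulo positive constants follows from the same Bochner-type identity applied to two putative Hermite--Einstein metrics.

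For the main implication $(1) \Rightarrow (2)$, I would introduce geodesic rays $(h_t)_{t \geq 0}$ of Hermitian metrics on $E$, i.e.\ paths arising from one-parameter families of self-adjoint endomorphisms of a reference metric $h_0$ and satisfying a homogeneous Monge--Amp\`ere type equation on $X \times \Delta^{*}$. Two ingredients drive the argument. First, the classical convexity $\frac{d^{2}}{dt^{2}}\mathcal{M}(h_t) \geq 0$ along smooth segments, extended by approximation to weak geodesics. Second, a slope formula
\[
\lim_{t \to \infty} \frac{\mathcal{M}(h_t)}{t} = \mathrm{DF}(\{h_t\}),
\]
where $\mathrm{DF}$ is an algebraic Donaldson--Futaki-type invariant expressible as a weighted sum of (normalized) degrees of reflexive subsheaves attached to the $\R$-filtration of $E$ determined by the ray. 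Slope stability becomes equivalent to strict positivity of $\mathrm{DF}$ on every nontrivial ray, and combined with convexity this yields the linear lower bound $\mathcal{M}(h) \geq \delta\, d(h_0, h) - C$, i.e.\ properness.

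For $(2) \Rightarrow (3)$, I would apply a variational method: take a minimizing sequence $(h_k)$ for $\mathcal{M}$; properness bounds $d(h_0, h_k)$, which permits extraction of a weak subsequential limit $h_\infty$ in a suitable completion of the space of Hermitian metrics. Lower semi-continuity of $\mathcal{M}$ then forces $h_\infty$ to be a minimizer, whose Euler--Lagrange equation is precisely Hermite--Einstein, and uniqueness follows from strict convexity of $\mathcal{M}$ along nontrivial trace-free geodesics between any two Hermite--Einstein metrics. The principal obstacle, exactly as in the Yau--Tian--Donaldson analogue, is the slope formula in $(1) \Rightarrow (2)$: one must match the asymptotic slope of $\mathcal{M}$ along a (possibly irregular) geodesic ray with an algebraic Mumford-weight-like invariant of the associated $\R$-filtration, and ensure the class of geodesic rays considered is rich enough that positivity on all of them detects slope stability via honest coherent subsheaves---bridging these analytic and algebraic sides is the technical heart of the approach.
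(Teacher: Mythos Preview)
Your cycle of implications matches the paper's, but the execution diverges in all three steps, and one of them has a genuine gap.

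For $(3)\Rightarrow(1)$ you invoke the classical Kobayashi--L\"ubke Bochner argument. That is correct, but it is precisely the proof the paper is \emph{replacing}: here $(3)\Rightarrow(1)$ is deduced from Theorem~A, by building a geodesic ray out of any candidate destabilizing subsheaf and reading off the asymptotic slope of $\mathcal{M}$ via~\eqref{slopes}. Since a Hermite--Einstein metric is a minimizer of $\mathcal{M}$ and $\mathcal{M}$ is convex along rays, a ray with nonpositive asymptotic slope is impossible. Your route is valid but not the paper's.

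For $(1)\Rightarrow(2)$ there is a real gap. You assert that strict positivity of the slope invariant on every nontrivial ray, together with convexity, yields a \emph{uniform} linear lower bound $\mathcal{M}(h)\ge\delta\,d(h_0,h)-C$. Pointwise positivity of the asymptotic slope does not by itself produce a uniform $\delta$; one needs a compactness argument on the space of normalized directions. The paper supplies exactly this via the contrapositive (Theorem~B): from a non-proper sequence one normalizes, applies Banach--Alaoglu in $\mathcal{H}^{1,p}$, uses a reverse Sobolev inequality (Proposition~\ref{propReverseSobolev}) to rule out the zero limit, and invokes lower semicontinuity of $\mathcal{M}$ in the weak $W^{1,p}$ topology (Proposition~\ref{prop:Mlsc}) to obtain a ray with $\mathcal{M}\le 0$. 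You also gloss over how an arbitrary ray yields an honest filtration by coherent subsheaves: this is Theorem~C, and it rests on the Uhlenbeck--Yau weakly holomorphic projection theorem (Theorem~\ref{UY}) after first showing that the eigenvalues of $w$ are forced to be constant and the off-diagonal upper blocks of $\bar\partial w$ vanish. Neither of these ingredients appears in your sketch.

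For $(2)\Rightarrow(3)$ you propose a direct minimization in a completion of the space of metrics. The paper explicitly avoids this, using the Hermitian--Yang--Mills flow instead, and remarks that regularity of a weak minimizer in $\mathcal{H}^{1,p}$ is only ``reasonable to believe.'' Your variational route therefore carries an unresolved regularity problem that the flow argument sidesteps entirely.
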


Here $E$ is \emph{slope stable} if and only if $\mu_{\mathcal{E}}<\mu_E$ for any nontrivial holomorphic torsion free subsheaf $\mathcal{E}\subset E$, where $\mu_{\mathcal{F}}$ denotes the slope of a holomorphic sheaf $\mathcal{F}$ over $X$ with respect to $\omega$. The \emph{Donaldson functional} $\mathcal{M}$ is a functional on the space of Hermitian metrics whose minimizers are exactly the Hermite--Einstein metrics. See~\S\ref{sec:background} for details.

The equivalence (1)$\Leftrightarrow$(3) in Theorem~\ref{DUY}
was the first result to link solvability of a geometric PDE to a stability condition in the sense of GIT, and has proven to be deeply influential in shaping subsequent results and conjectures, e.g. the Yau--Tian--Donaldson conjecture, as discussed shortly.

We have chosen the above formulation of Theorem~\ref{DUY} in order to emphasize the similarities with the variational approach to the Yau--Tian--Donaldson (YTD) conjecture on the existence of (unique) cscK metrics on polarized complex manifolds $(X,L)$. Despite much recent progress, this conjecture is still open in general, but it is settled for Fano manifolds, when $L=-K_X$, see~\cite{Ber16,CDS15,CDS15a,CDS15b,Tia15,DS16,CSW18,BBJ21}, and even for (possibly singular) log Fano pairs~\cite{Li22a}.

In the general YTD conjecture, the Donaldson functional is replaced by the Mabuchi K-energy functional, and slope stability by a suitable version of K-stability, a condition on the space of test configurations for $(X,L)$~\cite{Tia97,Don02}.

The analogue of the equivalence~(2)$\Leftrightarrow$(3) is known in full generality~\cite{BBE+19,DR17,CC21,CC21a}, and versions of~(3)$\Rightarrow$(1) and (1)$\Rightarrow$(2) have been shown in~\cite{BHJ19} and~\cite{Li21}, respectively (albeit with two different, conjecturally equivalent \cite{Li21,BJ22}, definitions of stability). In each of these proofs, the notion of a \emph{geodesic ray} in the space of singular semipositive metrics on $L$ plays a crucial role.

Going back to the Hermite--Einstein problem, there is also a natural notion of geodesic rays in the space of Hermitian metrics on $E$, and the goal of the present paper is to give new proofs of the implications~(3)$\Rightarrow$(1) and (1)$\Rightarrow$(2) by utilizing geodesic rays, paralleling the recent work on the YTD conjecture.

Our first result constructs a geodesic ray from any filtration. As in the YTD conjecture, we will actually construct rays of singular metrics. Let $\mathcal{H}^{1,p}$ be the space of trace-free endomorphisms on $E$ with coefficients in the Sobolev space $W^{1,p}$, $1 \leq p \leq \infty$. 

\begin{thmA}
Let $h_0$ be a hermitian metric on $E$ and \[0 =: \mathcal{E}_{m+1} \subset \mathcal{E}_m\subset \ldots \subset \mathcal{E}_{1} := E\] a filtration of $E$ by holomorphic subsheaves. Let $\mathcal{F}_i := \mathcal{E}_{i}/\mathcal{E}_{i+1}$, $1\leq i \leq m$. Then there exists $w\in\mathcal{H}^{1,\infty}$, such that the geodesic ray of singular hermitian metrics $h_t:= e^{tw}h_0$, $t\geq 0,$ satisfies:
\begin{equation}\label{slopes}
\lim_{t\rightarrow \infty}\frac{\mathcal{M}_{\omega}(h_t, h_0)}{t} = \sum_{k=1}^m 2\pi (m-k+1)\, \mathrm{rk}(\mathcal{F}_k)\, (\mu_{\mathcal{F}_k} - \mu_{E}),
\end{equation}
where $\mathrm{rk}(\mathcal{F}_k)$ is the rank of $\mathcal{F}_k$, and $\mu_{\mathcal{F}_k}$ is its slope (with respect to $\omega$).
\end{thmA}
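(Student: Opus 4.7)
My plan is to construct $w$ directly from the $h_0$-orthogonal projections onto the $\mathcal{F}_k$, verify that $h_t = e^{tw}h_0$ is a weak geodesic, and then compute the slope of $\mathcal{M}_\omega$ along it by combining convexity of the Donaldson functional with a \emph{block-diagonal degeneration} analysis of the curvature on the graded pieces.

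I would first treat the case when each $\mathcal{F}_k$ is a holomorphic subbundle. Set $V_j := \mathcal{F}_j \cap \mathcal{F}_{j-1}^{\perp_{h_0}}$, so that $E = \bigoplus_{j=1}^{m+1} V_j$ smoothly, and let $\pi_k: E \to E$ denote the $h_0$-orthogonal projection onto $\mathcal{F}_k$. I would then define
\[
w := \sum_{k=1}^{m} k\Big(\pi_k - \frac{\mathrm{rk}(\mathcal{F}_k)}{\mathrm{rk}(E)}\,I\Big).
\]
This $w$ is smooth, trace-free, and $h_0$-Hermitian; since it commutes with itself, $h_t = e^{tw}h_0$ automatically satisfies the geodesic equation $\ddot h_t = \dot h_t h_t^{-1}\dot h_t$. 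Moreover $w$ preserves each $V_j$ and acts there by a scalar $\lambda_j$, and a short computation gives $\lambda_j - \lambda_{j+1} = j$ together with the trace-free identity $\sum_j \lambda_j\,\mathrm{rk}(V_j) = 0$.

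For the slope, convexity of $\mathcal{M}_\omega$ along Hermitian geodesics gives
\[
\lim_{t\to\infty}\frac{\mathcal{M}_\omega(h_t,h_0)}{t} = \lim_{t\to\infty}\frac{d}{dt}\mathcal{M}_\omega(h_t,h_0) = \lim_{t\to\infty}\int_X\mathrm{tr}\!\big(w\, i\Theta(h_t)\big)\wedge\omega^{n-1},
\]
the cohomological constant dropping since $\mathrm{tr}(w)=0$. The key geometric input is that the $V_j$ remain $h_t$-orthogonal for every $t$ (because $w$ preserves them) and $h_t|_{V_j} = e^{t\lambda_j}\,h_0|_{V_j}$; as $t\to\infty$ the off-diagonal (second-fundamental-form) contributions to $\Theta(h_t)$ are suppressed, and the integral converges to $2\pi\sum_j\lambda_j\deg(\mathrm{gr}_j\mathcal{F})$. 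An Abel summation using $\lambda_j - \lambda_{j+1} = j$ and the trace-free identity then rewrites this as $\sum_{k=1}^{m}2\pi k\,\mathrm{rk}(\mathcal{F}_k)(\mu_{\mathcal{F}_k}-\mu_E)$, which is \eqref{slopes}.

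For a general filtration by holomorphic subsheaves I would replace each $\mathcal{F}_k$ by its saturation in $E$; this is reflexive, agrees with $\mathcal{F}_k$ outside an analytic set $\Sigma$ of codimension $\geq 2$, and has the same slope. On $X \setminus \Sigma$ the construction of $w$ is unchanged. The main obstacle I expect is to promote this to a global element of $\mathcal{H}^{1,\infty}$: the orthogonal projections onto singular subsheaves are typically not even continuous across $\Sigma$, so extending $w$ with $W^{1,\infty}$ regularity is delicate. I would expect to proceed either by resolving the filtration via a sequence of blowups $\pi: \tilde X \to X$ making each $\mathcal{F}_k$ a subbundle (then constructing $\tilde w$ upstairs and descending), or by a careful direct analysis near $\Sigma$ exploiting the specific combination of projections in $w$. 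Once $w \in \mathcal{H}^{1,\infty}$ is available, the slope computation carries over since $\Sigma$ is a null set for the Chern--Weil integrands involved.
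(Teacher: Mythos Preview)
Your proposal is correct and follows essentially the same route as the paper. Both arguments first treat the subbundle case by letting $w$ act as a scalar $\lambda_j$ on the $h_0$-orthogonal complements $V_j$, compute the slope of $\mathcal{M}$ along $e^{tw}h_0$ by showing that the second-fundamental-form contributions to the curvature decay exponentially in $t$ (the paper makes this precise via Lemmas~\ref{lemChernConnectionH0}--\ref{lemChernConnectionHt} and obtains the exact formula $\mathcal{M}(tw)=2\pi\sum_i\lambda_i\,\mathrm{rk}(F_i)(\mu_{F_i}-\mu_E)\,t-\sum_{i<j}B_{ji}(1-e^{-t(\lambda_i-\lambda_j)})$, while you only extract the asymptotic slope, which is all that is needed), and then handle general subsheaf filtrations by passing to a resolution $\tilde X\to X$ on which the saturated pullbacks become subbundles, constructing $\tilde w$ upstairs, and pushing forward; this is exactly Theorem~\ref{sheaf_slopes}. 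Your Abel-summation step and the paper's direct computation with general $\lambda_i$ are equivalent reformulations of the same identity.
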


Theorem~A, which can be viewed as an analogue of Theorem~A in~\cite{BHJ19}, easily gives the implication $(3)\Rightarrow (1)$ in Theorem~\ref{DUY}, using the fact that Hermite--Einstein metric are exactly the minimizers of the Donaldson functional, which is furthermore convex along any geodesic ray. Indeed, when $m=2$, the sign of the right-hand side of~\eqref{slopes} is evidently related to the slope stability of $E$, since for any $\mathcal{E}\subset E$, $\mathrm{rk}(\mathcal{E})(\mu_E - \mu_{\mathcal{E}}) = \mathrm{rk}(E/\mathcal{E})(\mu_{E/\mathcal{E}} - \mu_E)$.

We construct the desired ray in Theorem~A by first passing to a smooth resolution $\pi:\ti{X}\rightarrow X$ of the filtration (see e.g. \cite{Jac14, Sib15}). We then explicitly describe a smooth endomorphism $w$ on $\pi^*E$ which acts on $\pi^* h_0$ by scaling the induced metrics on the quotient bundles $\pi^*\mathcal{F}_{k}$ by $k$; pushing $w$ forward produces the desired ray downstairs.

\medskip
As we see from Theorem~A, it is useful to consider geodesics of singular metrics of the form $e^{tw}h_0$, for some $w\in\mathcal{H}^{1,p}$ -- note that $e^{tw}h_0$ will generally be much more singular than $w$. With this setup, the largest natural space of metrics to consider is:
\[
\mathcal{S} := \{w\in \mathcal{H}^{1,1}\ |\ \mathcal{M}(e^wh_0, h_0) < \infty\}.
\]
Proposition \ref{propReverseSobolev} (\cite{Don87}) shows that actually $\mathcal{S}\subset \mathcal{H}^{1,p_{\max}}$, where $p_{\max}:= \frac{2n}{2n-1}$, and it seems natural to interpret $\mathcal{H}^{1,p}$ as an analogue of the space $\mathcal{E}^1$ of metrics (or potentials) of finite energy in the study of cscK metrics (although it is not obvious to us that the constant $p_{\max}$ is optimal, c.f. Theorem~A). Our next result can be viewed as an analogue of~\cite[Theorem~2.16]{BBJ21}.

\begin{thmB}\label{thmB}
If the Donaldson functional $\mathcal{M}$ is non-proper on $\mathcal{S}$ with respect to the $W^{1,p}$-norm, for any $1 < p < p_{\max}$, then there exists a geodesic ray in $\mathcal{S}$ along which $\mathcal{M}$ is bounded above.
\end{thmB}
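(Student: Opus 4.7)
The plan is to mirror the approach of \cite[Theorem 2.16]{BBJ21} for the Mabuchi functional on the space of K\"ahler potentials. First, use the non-properness hypothesis to extract a sequence $w_j \in \mathcal{S}$ with $T_j := \|w_j\|_{W^{1,p}} \to \infty$ and $\sup_j \mathcal{M}(e^{w_j} h_0, h_0) \leq C$. Set $v_j := w_j / T_j$, so that $\|v_j\|_{W^{1,p}} = 1$, and let $\gamma_j(t) := e^{t v_j} h_0$ for $t \in [0, T_j]$ be the geodesic segment from $h_0$ to $e^{w_j} h_0$. Convexity of $\mathcal{M}$ along geodesics, together with $\mathcal{M}(h_0, h_0) = 0$, yields
\[
\mathcal{M}(\gamma_j(t), h_0) \leq \frac{t}{T_j}\, \mathcal{M}(e^{w_j} h_0, h_0) \leq \frac{Ct}{T_j},
\]
which tends to $0$ for each fixed $t \geq 0$ as $j \to \infty$.

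Next, extract a weak limit: since $(v_j)$ is bounded in $W^{1,p}$, a subsequence satisfies $v_j \rightharpoonup v_\infty$ weakly in $W^{1,p}$ and, by Rellich--Kondrachov, strongly in $L^p$. The candidate geodesic ray is then $h_t := e^{t v_\infty} h_0$, an affine ray in the logarithmic chart and hence a (singular) geodesic in $\mathcal{S}$ by construction. Lower semicontinuity of $\mathcal{M}$ under this convergence --- which should follow from its explicit expression as an integral involving $w$ and $\nabla w$ --- gives
\[
\mathcal{M}(h_t, h_0) \leq \liminf_{j \to \infty} \mathcal{M}(\gamma_j(t), h_0) \leq 0,
\]
so $h_t \in \mathcal{S}$ and $\mathcal{M}$ is uniformly bounded above along the ray.

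The main obstacle is ensuring $v_\infty \neq 0$, so that the ray is non-degenerate. A priori, the normalization $\|v_j\|_{W^{1,p}} = 1$ permits concentration: $v_j$ could vanish in $L^p$ while its gradient carries all the $W^{1,p}$-mass, leaving $v_\infty = 0$. The strategy to rule this out is twofold: (i) choose $(w_j)$ extremally --- say, approximately minimizing $\mathcal{M}(e^w h_0, h_0)$ subject to the constraint $\|w\|_{W^{1,p}} = T_j$ --- and use sequential lower semicontinuity of $\mathcal{M}$ on the $W^{1,p}$-unit sphere to force $\|v_\infty\|_{W^{1,p}} > 0$; or (ii) invoke a coercivity/concentration-compactness estimate tailored to $\mathcal{M}$ in this setting, perhaps via the reverse Sobolev inequality of Proposition \ref{propReverseSobolev}, to obtain a uniform lower bound $\|v_j\|_{L^p} \geq c > 0$ that passes to $v_\infty$ via the strong $L^p$ convergence. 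This concentration step is the technical heart of the argument, and the restriction $1 < p < p_{\max}$ is precisely what underwrites both the Rellich--Kondrachov compactness and the requisite control of $\mathcal{M}$ along the normalized sequence.
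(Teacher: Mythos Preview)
Your outline is the paper's proof: normalize, use convexity to get $\mathcal{M}(tv_j)\to 0$ for each fixed $t$, extract a weak $W^{1,p}$ limit, apply lower semicontinuity of $\mathcal{M}$ (Proposition~\ref{prop:Mlsc}), and rule out $v_\infty=0$ via the reverse Sobolev inequality. Your option~(ii) is exactly the mechanism used, and option~(i) is not pursued (nor does it obviously yield a contradiction with $v_\infty=0$).

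The one place that needs sharpening is the exponent in the non-degeneracy step. Proposition~\ref{propReverseSobolev} reads $\|w\|_{W^{1,p}} \le C\mathcal{M}(w) + C(\|w\|_{L^{p^*}}+1)$ with $p^*=\tfrac{p}{2-p}$, so along the sequence one obtains $\|v_j\|_{L^{p^*}}\ge c>0$, not an $L^p$ lower bound. A lower bound in the stronger norm $L^{p^*}$ does not descend to $L^p$, so strong $L^p$ convergence is useless here; what one needs is strong $L^{p^*}$ convergence. This is precisely where the hypothesis $p<p_{\max}$ enters: it is equivalent to $p^*<p'$ (the Sobolev conjugate), which is exactly the condition for $W^{1,p}\hookrightarrow L^{p^*}$ to be compact by Rellich--Kondrachov. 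The lower bound then passes to $v_\infty$. Compactness of $W^{1,p}\hookrightarrow L^p$ holds for every $p\ge 1$ and plays no role.

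One minor point on the sequence extraction: non-properness gives $w_k$ with $\mathcal{M}(w_k)<k^{-1}\|w_k\|_{W^{1,p}}-k$, and $T_k\to\infty$ then follows from the elementary lower bound $\mathcal{M}(w)\ge -C\|w\|_{L^1}$ (since $f_w\ge 0$). Your stronger claim $\sup_j\mathcal{M}(w_j)\le C$ is not immediate from non-properness, though it can be arranged a posteriori by rescaling each $w_k$ along its own geodesic segment using convexity; the paper simply works with the weaker inequality, which already gives $\mathcal{M}(t\widetilde{w}_k)\le t/k$.
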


Note that Proposition \ref{propReverseSobolev} actually implies that $\mathcal{M}$ will be proper with respect to the $W^{1,p_{\max}}$-norm if $E$ admits an HE metric, so that Theorem~B can likely be improved.

There are two main ingredients in the proof of Theorem~B. The first is the lower semicontinuity of $\mathcal{M}$ in the weak $W^{1,p}$-topology, a fact for which we give a new, elementary proof, see Proposition~\ref{prop:Mlsc}. The analogous fact in the cscK case is that the Mabuchi functional is lsc with respect to the strong topology on the space of metrics of finite energy, see~\cite{BBE+19}. The second is a compactness statement, which here boils down to the Banach--Alaoglu theorem. 
The analogue in the cscK case is that sets of bounded entropy are strongly compact, as proved in~\cite{BBE+19}.

\medskip
Finally we go from geodesic rays to filtrations:
\begin{thmC}
Suppose that $E$ admits a geodesic ray in $\mathcal{S}$ along which the Donaldson functional is bounded from above. Then there exists a nontrivial filtration of $E$ by holomorphic subsheaves $\{\mathcal{E}_k\}_{k=1}^{m+1}$ such that $\mu_{\mathcal{E}_k}\ge\mu_E$ for at least one $k$. In particular, $E$ is not slope stable. 
\end{thmC}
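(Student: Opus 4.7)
The strategy is to extract the destabilizing filtration from the spectral decomposition of the endomorphism $w\in\mathcal{H}^{1,p_{\max}}$ (cf.\ Proposition~\ref{propReverseSobolev}) generating the ray $h_t = e^{tw}h_0$, following the classical approach of Uhlenbeck--Yau. Since $\mathcal{M}$ is convex along the geodesic and bounded from above by hypothesis, its slope at infinity is controlled:
\begin{equation}\label{eq:plan-bound}
\lim_{t\to\infty}\frac{\mathcal{M}(h_t,h_0)}{t}\leq 0.
\end{equation}
The plan is to identify this slope with a Mumford-type sum built from the spectral data of $w$, and then exploit the sign of \eqref{eq:plan-bound} to locate at least one $\mathcal{F}_k$ with $\mu_{\mathcal{F}_k}\geq\mu_E$.

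For each $\lambda\in\mathbb{R}$, let $P_\lambda:=\chi_{(-\infty,\lambda]}(w)$ denote the pointwise spectral projection of $w$, well-defined a.e.\ because $w$ is self-adjoint with respect to $h_0$. The main technical step is to show that, for all but countably many $\lambda$, the projection $P_\lambda$ lies in a Sobolev class to which Uhlenbeck--Yau's weakly-holomorphic-subsheaf theorem applies, and satisfies $(I-P_\lambda)\bar\partial P_\lambda=0$ weakly. Since $w$ is only known to lie in $W^{1,p}$ for $p<2$, obtaining this regularity is delicate; the standard remedy is a coarea-style argument that produces a full-measure set of \emph{regular values} of $\lambda$ at which $P_\lambda$ inherits enough integrability from $w$. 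Applying Uhlenbeck--Yau at these regular values yields coherent holomorphic subsheaves $\mathcal{F}_\lambda\subset E$ of rank $\mathrm{tr}(P_\lambda)$. These subsheaves are nested in $\lambda$ and jump only at finitely many values $\lambda_1<\ldots<\lambda_{m+1}$, producing a filtration $0=\mathcal{F}_0\subsetneq\mathcal{F}_1\subsetneq\ldots\subsetneq\mathcal{F}_{m+1}=E$, which is nontrivial because $w$ is trace-free and nonzero.

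The final step is to identify \eqref{eq:plan-bound} with a Mumford-type sum of the form $\sum_k c_k\,\mathrm{rk}(\mathcal{F}_k)(\mu_{\mathcal{F}_k}-\mu_E)$ (up to an overall sign) with strictly positive coefficients $c_k=\lambda_{k+1}-\lambda_k$, directly generalizing the integer-weight formula of Theorem~A. This is done by approximating $w$ from above and below by endomorphisms whose eigenvalues are piecewise-constant on finer and finer partitions of $\mathbb{R}$, applying Theorem~A at each stage, and passing to the limit via the lower semicontinuity of $\mathcal{M}$ established in Proposition~\ref{prop:Mlsc}. Together with \eqref{eq:plan-bound} and the strict positivity of $c_k$ and $\mathrm{rk}(\mathcal{F}_k)$, a sign analysis forces at least one factor $\mu_{\mathcal{F}_k}-\mu_E$ to be nonnegative, yielding the desired destabilizing subsheaf. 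The principal obstacle throughout will be the borderline Sobolev regularity of the spectral projections: threading the $W^{1,p}$ control on $w$ (with $p<2$) through Uhlenbeck--Yau's regularity machinery is the key technical point, essentially reproducing the hardest step of the original Uhlenbeck--Yau paper.
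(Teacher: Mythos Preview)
Your proposal misses the key rigidity step that drives the paper's proof, and as a result you are attacking a much harder problem than necessary. The paper does \emph{not} try to extract $W^{1,2}$ regularity of the spectral projections directly from the $W^{1,p}$ regularity of $w$ via a coarea argument; instead it first shows that the hypothesis $\mathcal{M}(tw)\le 0$ forces $w$ to be extremely rigid. Concretely, writing $\mathcal{M}(tw)=\int_X f_{tw}\,\omega^n + t\int_X\mathrm{tr}(\Theta_0 w)\wedge\omega^{n-1}$ and using the elementary bound $\tfrac{e^x-1-x}{x^2}\ge\tfrac12$ for $x\ge0$, one gets
\[
\int_X f_{tw}\,\omega^n \;\ge\; \frac{t^2}{2}\sum_{a\le b}\int_X|\eta^a_b|^2\,\omega^n,
\]
while the hypothesis gives $\int_X f_{tw}\,\omega^n\le Ct$. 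Comparing growth in $t$ forces $\eta^a_b=0$ a.e.\ for all $a\le b$. In particular $\bar\partial\zeta_a=\eta^a_a=0$, so the eigenvalues of $w$ are \emph{constant} on $X$, and the entire upper-triangular block of $\bar\partial w$ vanishes. Once the eigenvalues are constant, $f_w\in L^1$ immediately gives $\eta^i_j\in L^2$ for all blocks, so the finitely many spectral projections $\pi_s$ lie in $\widetilde{\mathcal{H}}^{1,2}$, and the vanishing of the upper blocks gives $(I-\pi_s)\bar\partial\pi_s=0$ by a direct matrix computation. Uhlenbeck--Yau then applies cleanly.

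Relative to this, your plan has two genuine gaps. First, you give no mechanism for obtaining $W^{1,2}$ regularity of $P_\lambda$ or the weak holomorphicity condition $(I-P_\lambda)\bar\partial P_\lambda=0$ from $w\in W^{1,p}$ with $p<2$; the ``coarea-style argument'' you allude to does not obviously produce $L^2$ control on $\bar\partial P_\lambda$, and weak holomorphicity needs an independent argument. Both issues evaporate once you use the rigidity above. Second, your approximation step to identify the asymptotic slope with a Mumford sum is unnecessary: after the rigidity step, $w$ is \emph{literally} of the form constructed in Theorem~\ref{sheaf_slopes}, so the exact slope formula holds on the nose and Corollary~\ref{corNegSlope} (equivalently Lemma~\ref{lemAlphaSumPositive}) finishes the sign analysis.
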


Theorem~C follows from a formula for $\mathcal{M}(h_t,h_0)$ in terms of the eigenvalues of $\log(h_0h_t^{-1})$, due to Donaldson \cite{Don87}. Using this, we show that $\mathcal{M}(h_t)$ can only be bounded from above under very restrictive circumstances: essentially, the geodesic ray $(h_t)$ must have come from a construction similar to Theorem~A. The weakly holomorphic $W^{1,2}$-projection theorem of Uhlenbeck-Yau \cite{UY86, UY89} can then be used to produce the desired filtration; applying \eqref{slopes} shows that at least one of the subsheafs in the filtration has slope larger than $\mu_E$. 

The role of Theorem~C in the cscK case $(X,L)$ is played by Theorem~6.4 in~\cite{BBJ21}, which to any geodesic ray (of linear growth) of metrics of finite energy associates a psh metric of finite energy on the Berkovich analytification of the line bundle with respect to the (non-Archimedean) trivial absolute value on $\mathbf{C}$. As proved by C.~Li in~\cite{Li21}, the slope at infinity of the Mabuchi functional along the ray is bounded below by the Mabuchi functional evaluated at the non-Archimedean metric.
In the setting of Theorem~C, the limiting object is simpler, given by a filtration of $E$ by holomorphic subsheaves.

The combination of Theorems~B and~C evidently give us the implication (1)$\Rightarrow$(2) in Theorem~\ref{DUY}. As already mentioned, (3)$\Rightarrow$(1) follows from Theorem~A. The remaining implication (2)$\Rightarrow$(3) can be shown by an easy application of the Hermitian-Yang-Mills flow~\cite{Don85}. In the K\"ahler-Einstein case, any minimizer of the Mabuchi functional is a K\"ahler--Einstein metric~\cite{BBGZ13}, and the corresponding result in the cscK case holds as well~\cite{CC21,CC21a}. It is reasonable to believe that a minimizer in $\mathcal{H}^{1,p}$ of the Donaldson functional must in fact be a (smooth) Hermitian metric.

\medskip
\noindent {\bf Comparison with previous works:} In terms of history, L\"ubke \cite{Lub83} and Kobayashi \cite{Kob87} first proved the implication (3)$\Rightarrow$(1) in Theorem \ref{DUY} by using vanishing theorems for $E$. The more difficult implication (1)$\Rightarrow$(3) was proved by Donaldson~\cite{Don83} for projective surfaces and by Uhlenbeck and Yau~\cite{UY86} in general. 
Donaldson gave another proof in~\cite{Don87} for projective manifolds, using induction on dimension and a theorem of Mehta--Ramanathan \cite{MR84}, and the implications (1)$\Rightarrow$(2) and (2)$\Rightarrow$(3) can be extracted from results in that paper. 

Subsequent work of Simpson simultaneously unified and generalized the approaches in \cite{UY86} and \cite{Don87}, establishing a version of Theorem \ref{DUY} for Higgs bundles over certain non-compact K\"ahler manifolds. His usage of a blow-up argument along a non-proper ray to extract a limiting endomorphism and subsequent filtration is similar in spirit to our proof of Theorem $B$, but with several differences; firstly, his notion of properness is less general than ours, only holding for a special subclass of metrics with $L^1$-curvature. Several simplifications follow from this -- for instance, it is easy to show the lower semi-continuity of $\mathcal{M}$ on this smaller space, and he has no need to work with regularizations of subsheaves.

Quite recently, Hashimoto and Keller~\cite{HK19,HK20} have given a new proof of the implication~(3)$\Rightarrow$(1), and a conditional new proof of (3)$\Rightarrow$(1), both in the polarized case when $\omega\in c_1(L)$, for an ample line bundle $L$. Like ours, their approach is variational in nature, but uses geodesics in the space of Hermitian norms on global sections of $E\otimes L^k$ for $k\gg0$.

There has also been a great deal of work on singular versions of Theorem \ref{DUY}. In \cite{BS94}, Bando and Siu introduced the notion of an admissible Hermite-Einstein metric on a torsion-free sheaf, and showed that a reflexive sheaf on a K\"ahler manifold admits an admissible Hermite-Einstein metric if and only if it is polystable. Subsequent work has focused on similar results on singular varieties, see e.g. Chen and Wentworth \cite{CW21}.

\cite{BS94} also introduced a regularization procedure for holomorphic subsheaves of $E$, which was elaborated upon by Jacobs \cite{Jac14} and Sibley \cite{Sib15} (see also \cite{Buc99}). This procedure is an important tool in our proofs (c.f. Theorem \ref{sheaf_slopes}). It was used by Jacobs \cite{Jac14} to generalize Theorem \ref{DUY} to semi-stable bundles, motivated by work of Kobayashi \cite{Kob87}.

Other generalizations of Theorem \ref{DUY} include generalizations to compact Hermitian manifolds (\cite{Buc99, LY87}), and very recent work of Feng-Liu-Wan \cite{FLW18}, which expanded Theorem \ref{DUY} to include the existence of Finsler-Einstein metrics.

\medskip 

\noindent {\bf Organization:} In Section \ref{sec:background} we set some definitions and collect several background results. In Section \ref{sec:filtrations} we prove Theorem A (c.f. Theorem \ref{sheaf_slopes}). In Section \ref{sec:Don} we prove Proposition \ref{propReverseSobolev}, which can be seen as a reverse Sobolev inequality for $w\in\mathcal{S}$, and show the lower semicontinuity of $\mathcal{M}$ on $\mathcal{H}^{1,p}$. We then show Theorems B and C in Section \ref{sec:final}.

\medskip

\subsection*{Acknowledgements} We would like to thank Yoshinori Hashimoto for pointing out an inaccuracy in an earlier draft. The first author was supported by NSF grants DMS-1900025 and DMS-2154380.


\section{Background Material}\label{sec:background}


\subsection{Slope stability and Sobolev Endomorphisms}\label{sec:sobenv}

For any holomorphic, torsion-free sheaf $\mathcal{E}$ on $X$, write:
\[
\mu_\mathcal{E} := \frac{\int_X c_1(\mathcal{E})\wedge \omega^{n-1}}{\mathrm{rk}(\mathcal{E})},
\]
for the {\em slope} of $\mathcal{E}$, with respect to $\omega$. We say $E$ is {\em slope stable} if:
\[
\mu_\mathcal{E} < \mu_E
\]
for all proper, saturated torsion-free subsheaves $\mathcal{E}\subset E$. $E$ is said to be {\em  slope semi-stable} if $\mu_\mathcal{E}\leq \mu_E$ for all such $\mathcal{E}$, and {\em slope unstable} otherwise. A subsheaf satisfying $\mu_\mathcal{E} > \mu_E$ is said to be {\em destabilizing.}

Fix a Hermitian metric $h_0$ on $E$. We can identify $\mathrm{Herm}(E)$, the space of all smooth Hermitian metrics on $E$, with $\ti{\mathcal{H}}$, the space of $h_0$-self adjoint endomorphisms of $E$ by:
\[
h\in\mathrm{Herm}(E) \mapsto \log(hh_0^{-1});
\]
note that geodesics in $\mathrm{Herm}(E)$ map to straight line segements in $\ti{\mathcal{H}}$, and visa versa. We have $\ti{\mathcal{H}} = \mathcal{H}\oplus \R$, where $\mathcal{H}$ is the (geodesically complete) subspace of trace-free endomorphims. 

Write $\mathcal{H}^{1,p} := \mathcal{H}\otimes_{C^\infty} W^{1,p}$, for any $p\geq 1$. We refer to straight line segments in $\mathcal{H}^{1,p}$ as {\em weak} geodesics, sometimes without the adjective if the lack of regularity is clear from context. For any $w \in \mathcal{H}^{1,p}$, we define $\| w \|_L^p$ to be the $L^p$-norm of the operator norm of $w$ i.e.~
\[ \|w\|_{L^p}^p = \int_X \mathrm{tr}(ww^*)^{p/2} \omega^n. \]

The duality pairing between $\mathcal{H}^{1,p}$ and $\mathcal{H}^{1,q}$, $q :=\frac{p}{p-1}$, is defined to be:
\[
\langle w, u\rangle := \int_X \mathrm{tr}(w \ov{u})\omega^n + \sqrt{-1}\int_X \mathrm{tr}(D' w\wedge \pbar u)\wedge \omega^{n-1},
\]
where $D = D' + \pbar$ is the Chern connection of $h_0$; it follows that $\mathcal{H}^{1, q}$, is linearly dual to $\mathcal{H}^{1,p}$. Standard functional analysis implies $\mathcal{H}^{1,p}$ is reflexive for $p > 1$, so by the Banach--Alaoglu Theorem, any bounded subset of $\mathcal{H}^{1,p}$ is weakly compact, i.e. if $\norm{\pbar w_i}_{L^{p}} \leq C$, then there exists a subsequence of the $i$ such that, after relabeling:
\[
w_i \rightharpoonup w\in \mathcal{H}^{1,p},
\]
in the sense that:
\[
\langle w_i, u\rangle \rightarrow \langle w, u\rangle,
\]
for every $u\in\mathcal{H}^{1,q}$.

For the purposes of this paper, it will be convenient to fix $1 < p\leq p_{\max} := \frac{2n}{2n-1}$. Then $p < 2$, and the Sobolev conjugate of $p$ is $p' := \frac{2np}{n-p}$ and:
\[
p' > \frac{p}{2-p} =: p^*
\]
Note that $p'_{\max} = p^*_{\max}$, The Gagliardo-Nirenberg-Sobolev inequality can be given as:
\begin{equation}\label{Sobolev}
\norm{w}_{L^{p'}} \leq C_{\mathrm{Sob}}\norm{w}_{W^{1,p}}\text{ for any }w\in\mathcal{H}^{1,p},
\end{equation}
and by the Sobolev embedding theorem, the inclusion $W^{1,p}\hookrightarrow L^{p^*}$ is compact -- this will be the only reason we need to restrict to $p < p_{\max}$ in the proof of Theorem~B.

Given $w\in \mathcal{H}$, write $\lambda_1 \geq \ldots \geq \lambda_r$ for the eigenvalues of $w$; these will be Lipschitz functions on $X$ such that $\sum_{i=1}^r \lambda_i = 0$. When $w\in\mathcal{H}^{1,p}$, the $\lambda_i$ may only be in $L^{p'}$. 

For any $w\in\mathcal{H}^{1,p}$, we define $e^w$ via the power series formula. The resulting self-adjoint endomorphism will be measurable and a.e. finite, but may not be integrable, and it follows that the same applies to $e^w h_0$.

\subsection{Donaldson Functional and Properness}

Recall now the Donaldson functional; if $w\in\mathcal{H}$, the functional was originally defined by Donaldson \cite{Don87} as:
\[
\mathcal{M}(w) := \mathcal{M}(e^wh_0, h_0) = \int_0^1 \int_X \mathrm{tr}(v_s \Theta_s) \wedge \omega^{n-1}\wedge ds,
\]
where $h_s := e^{sw}h_0$, $v_s := (d_s h_s)h_s^{-1}$, and $\Theta_s := \sqrt{-1}\pbar((\p h_s)h_s^{-1})$ is the curvature of $h_s$ (note the $\sqrt{-1}$ factor). Recall that $\mathcal{M}(tw)$ is convex in $t$, and $\mathcal{M}(0) = 0$. By construction, $\mathcal{M}$ is a Lagrangian for the Hermite-Einstein equation, and it is standard to check that if $w\in\mathcal{H}$ is a minimizer of $\mathcal{M}$ and $\Theta$ is the curvature of $h = e^wh_0$, then:
\[
\Theta\wedge \omega^{n-1} = \gamma I_E \cdot \omega^n,
\]
with $\gamma = \frac{2\pi\, n\mu_E}{\int_X \omega^n}$, i.e. $h$ is Hermite-Einstein.

There is another formulation of the Donaldson functional in terms of the eigenvalues of $w$. For a point $z \in X$, let $\{e_i\}$ be a unitary frame for $E_{z}$ which diagonalizes $w$ at $z$, with $w(z) = \mathrm{Diag}(\lambda_1\mathrm{Id}_{r_1},\dots,\lambda_m\mathrm{Id}_{r_m})$ such that $\lambda_1 > \dots > \lambda_m$ and $\sum_ir_i = r$. Write $\eta^i_j \in \mathcal{A}^{0,1}(X)$  for the $r_i \times r_j$ block matrix of $(\pbar w)(z)$ with respect to the $\{e_i\}$-basis, and define:
\begin{align*}
  \abs{\eta^i_j}^2(z) &:=  n  \frac{\sqrt{-1} \mathrm{tr}(\eta^i_j \wedge ({\eta}^i_j)^{*}) \wedge \omega^{n-1}}{\omega^n}(z) \\
  &= n \sum_{\substack{1 \leq k \leq r_i\\1 \leq l \leq r_j}} \frac{\sqrt{-1}(\eta^i_j)_l^k\wedge \ov{({\eta}^i_j)^k_l} \wedge \omega^{n-1}}{\omega^n}(z),
\end{align*}
where the $(\eta^i_j)_l^k$ are the components of the block matrix $\eta_j^i$. 
We then define the function:

\begin{equation}\label{function_f}
 f_w(z) := \sum_{i,j=1}^m |\eta^i_j|^2\frac{e^{\lambda_i - \lambda_j}  - (\lambda_i - \lambda_j) - 1}{(\lambda_i-\lambda_j)^2},
\end{equation}
where we interpret $\frac{e^x - x - 1}{x^2}$ as $\frac{1}{2}$ when $x = 0$.

\begin{proposition}
$f_w$ is a well-defined function on $X$.
\end{proposition}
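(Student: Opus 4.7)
The potential obstructions to $f_w$ being well-defined are two-fold: (i) at each point $z \in X$, the diagonalizing unitary frame $\{e_i\}$ is only unique up to a unitary transformation within each eigenspace of $w(z)$, so I must check the formula is invariant under such changes; (ii) the block decomposition depends on the multiplicities $r_i$ of the distinct eigenvalues $\lambda_i$, which may vary across $X$, so the formula must agree with any natural refinement of the decomposition.

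The plan is to reinterpret the formula in a manifestly invariant way. Let $P_1,\ldots,P_m$ be the $h_0$-orthogonal projections onto the eigenspaces of $w(z)$, which are intrinsic data depending only on $w(z)$. The block $\eta^i_j$ is nothing but $P_i(\pbar w)(z) P_j$, viewed as an element of $\Hom(E_{z,\lambda_j},E_{z,\lambda_i}) \otimes T^{0,1}_zX$. The quantity
\[
|\eta^i_j|^2 = n\frac{\sqrt{-1}\,\mathrm{tr}(\eta^i_j \wedge (\eta^i_j)^*) \wedge \omega^{n-1}}{\omega^n}
\]
is defined via a trace and is therefore invariant under the remaining unitary freedom $\eta^i_j \mapsto U_i \eta^i_j U_j^*$ within each eigenspace. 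Since the eigenvalues $\lambda_i$ and the coefficients $\phi(\lambda_i - \lambda_j) := \frac{e^{\lambda_i - \lambda_j}-(\lambda_i-\lambda_j)-1}{(\lambda_i-\lambda_j)^2}$ (with $\phi(0) = 1/2$) are likewise intrinsic, this handles obstruction~(i).

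For obstruction~(ii), I would verify that the formula is consistent under further subdivision of an eigenspace. Concretely, if one replaces the block decomposition corresponding to $\lambda_i$ (with multiplicity $r_i$) by any finer orthogonal splitting of that eigenspace into sub-blocks (artificially assigning them the common eigenvalue $\lambda_i$), then every new pair of sub-blocks carries the same weight $\phi(0) = 1/2$, and summing $|\eta^{i'}_{j'}|^2$ over all pairs within the eigenspace recovers $|\eta^i_i|^2$ by additivity of the trace. Hence the value of $f_w(z)$ is independent of how fine the decomposition is, and in particular matches the value computed from the \emph{minimal} decomposition by distinct eigenvalues at each point. The cleanest encoding of this is via double functional calculus: letting $L_w, R_w$ denote left/right multiplication by $w$ on $\mathrm{End}(E_z)$, the operators $L_w$ and $R_w$ commute and are self-adjoint, so $\phi(L_w - R_w)$ is well-defined by the spectral theorem, and
\[
f_w(z) = n\frac{\sqrt{-1}\,\mathrm{tr}\bigl(\pbar w \wedge \phi(L_w - R_w)((\pbar w)^*)\bigr)\wedge\omega^{n-1}}{\omega^n}(z),
\]
which is manifestly intrinsic.

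The main obstacle I expect is not any single calculation but making sure that the invariance argument at points where eigenvalues collide is presented in a conceptually clear way; the smoothness of $\phi$ on $\R$ (obtained from its power series expansion $\phi(x) = \sum_{k\ge 0} \frac{x^k}{(k+2)!}$) is what makes the functional-calculus reformulation legitimate and also yields, as a free byproduct, that $f_w$ is defined at every point of $X$ regardless of eigenvalue multiplicities.
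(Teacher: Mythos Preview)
Your argument is correct, and its core—observing that $|\eta^i_j|^2$ is invariant under the block-unitary ambiguity $\eta^i_j\mapsto U_i\eta^i_j U_j^*$—is exactly what the paper proves, though the paper does it by a direct two-line trace computation rather than by invoking the intrinsic projections $P_i$. Where you genuinely go beyond the paper is in addressing your obstruction~(ii) and in giving the double-functional-calculus reformulation $f_w = n\,\mathrm{tr}(\pbar w\wedge \phi(L_w-R_w)(\pbar w)^*)\wedge\omega^{n-1}/\omega^n$. The paper does not need~(ii) because its definition of $f_w(z)$ fixes the block sizes to be the multiplicities of the \emph{distinct} eigenvalues $\lambda_1>\cdots>\lambda_m$ at $z$, so there is no choice of decomposition to be made; only the frame within each eigenspace is ambiguous. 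Your extra observations are not required for the proposition as stated, but the functional-calculus viewpoint is a clean way to see at once that $f_w$ is intrinsically defined and that the formula is stable under eigenvalue collisions—precisely the kind of robustness the paper later exploits implicitly in the lower-semicontinuity argument.
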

\begin{proof}
  Fix $z \in X$ and suppose we have another unitary frame $\{f_i\}$ which diagonalizes $w(z)$. Since the eigenvalues of $w(z)$ are independent of the choice of a  basis, the matrix of $w(z)$ with respect to the $\{f_i\}$-basis will be still be $\mathrm{Diag}(\lambda_1\mathrm{Id}_{r_1},\dots,\lambda_m\mathrm{Id}_{r_m})$, and we can relate the $\{e_i\}$ and $\{f_i\}$ bases by a block diagonal unitary transformation $\mathrm{Diag}(A_1,\dots,A_m)$, where each $A_i$ is an $r_i \times r_i$ unitary matrix.
  
  It follows that $(\dbar w)(z)$ with respect to the $\{f_i\}$-basis is given by $A_i \eta_j^{i} A_j^{*}$, and so it suffices to check that $\mathrm{tr}(\eta^i_j \wedge (\eta^i_j)^{*}) = \mathrm{tr}(A_i \eta_j^{i} A_j^{*} \wedge (A_i \eta_j^{i} A_j^{*})^{*} )$. This can be seen by computing:
 \begin{align*}
   \mathrm{tr}(A_i \eta_j^{i} A_j^{*} \wedge (A_i \eta_j^{i} A_j^{*})^{*} ) &=  \mathrm{tr}(A_i \eta_j^{i} A_j^{*} \wedge A_j (\eta_j^{i})^{*} A_i^{*} ) \\
                                                                            &= \mathrm{tr}(A_i \eta_j^{i} \wedge (\eta_j^{i})^{*} A_i^{*} ) \\
                                                                            &= \mathrm{tr}(A_i^{*} A_i \eta_j^{i} \wedge (\eta_j^{i})^{*}) \\
   &= \mathrm{tr}(\eta_j^{i} \wedge (\eta_j^{i})^{*}),
 \end{align*}
 since each $A_i$ and $A_j$ are unitary.
\end{proof}

\begin{remark}
Note that in the above definition the ranks of the block matrices, $r_i$ could vary with $z \in X$. However, if the eigenvalues of $w$ are constant over $X$, then the ranks $r_i$ do not depend on the points $z \in X$. If $U \subset X$ is such that $E|_U$ is a trivial vector bundle, we can also think of $\eta$ as matrix-valued functions on $U$.
\end{remark}

Write $\Theta_0$ for the curvature of $h_0$. 
\begin{proposition}
For any $w\in\mathcal{H}$, we have:
\[
\mathcal{M}(w) = \int_X f_w  \omega^n + \int_X \mathrm{tr}(\Theta_0 w)\wedge\omega^{n-1}.
\]
\end{proposition}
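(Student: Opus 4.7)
The plan is to reduce Donaldson's defining time integral to a pointwise identity via the Daleckii--Krein functional calculus, after first stripping off the two obvious pieces. Since $w$ is $h_0$-self-adjoint it commutes with $e^{sw}$, hence the velocity simplifies to $v_s = (d_sh_s)h_s^{-1} = w$, and so
\[
\mathcal{M}(w) = \int_0^1 \int_X \mathrm{tr}(w\,\Theta_s)\wedge\omega^{n-1}\,ds.
\]
Splitting $\Theta_s = \Theta_0 + (\Theta_s - \Theta_0)$, the $\Theta_0$ piece is independent of $s$ and integrates to $\int_X \mathrm{tr}(\Theta_0 w)\wedge\omega^{n-1}$, reducing the task to showing
\[
\int_0^1 \int_X \mathrm{tr}\bigl(w(\Theta_s - \Theta_0)\bigr)\wedge\omega^{n-1}\,ds = \int_X f_w\,\omega^n.
\]

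For the residual, I would work pointwise at each $z \in X$ in a local frame chosen to be unitary for $h_0$, to diagonalize $w(z) = \mathrm{Diag}(\lambda_i\mathrm{Id}_{r_i})$, and to satisfy $(\p h_0)(z) = 0$. This is legitimate because $\Theta_s - \Theta_0$ is tensorial (a difference of Chern curvature forms). In such a frame the cross terms involving $\p h_0 \cdot h_0^{-1}$ drop out at $z$, so the tensorial difference
\[
A_s := \p h_s h_s^{-1} - \p h_0 h_0^{-1}
\]
evaluates to $A_s(z) = \p(e^{sw})(z)\,e^{-sw}(z)$. Daleckii--Krein then expresses
\[
A_s(z) = \sum_{i,j}\frac{e^{s(\lambda_i - \lambda_j)} - 1}{\lambda_i - \lambda_j}\,P_i(\p w)(z)P_j,
\]
with the diagonal entries interpreted as the limit $s$ when $\lambda_i = \lambda_j$.

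Next I would use $\Theta_s - \Theta_0 = \sqrt{-1}\pbar A_s$ together with integration by parts: the boundary contribution $\pbar\mathrm{tr}(w A_s)\wedge\omega^{n-1}$ is $d$-exact because the $\p$-part has the wrong bidegree after wedging with $\omega^{n-1}$ and $\omega$ is closed, so
\[
\int_X \mathrm{tr}\bigl(w(\Theta_s - \Theta_0)\bigr)\wedge\omega^{n-1} = -\sqrt{-1}\int_X \mathrm{tr}(\pbar w \wedge A_s)\wedge\omega^{n-1}.
\]
Expanding $\pbar w = \sum_{k,l}\eta^k_l$ block-wise and using cyclicity of the trace together with $P_iP_j = \delta_{ij}P_i$, the quadruple sum collapses to a single sum over eigenspace pairs whose integrand is a Daleckii--Krein-weighted combination of $\mathrm{tr}(\eta^i_j \wedge (\eta^i_j)^*)$ (after matching $P_i(\p w)P_j$ with the adjoint of $\eta^j_i$ via $w = w^*$). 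The final $s$-integration
\[
\int_0^1 \frac{e^{s(\lambda_i - \lambda_j)} - 1}{\lambda_i - \lambda_j}\,ds = \frac{e^{\lambda_i - \lambda_j} - (\lambda_i - \lambda_j) - 1}{(\lambda_i - \lambda_j)^2}
\]
produces exactly the coefficient appearing in~\eqref{function_f}, and the factor $n$ together with the passage from $\omega^{n-1}$ to $\omega^n$ is absorbed by the definition of $|\eta^i_j|^2$.

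The main obstacle will be the bookkeeping: tracking the $\sqrt{-1}$ through the integration by parts, correctly identifying $P_i(\p w)P_j$ with $(\eta^j_i)^*$ in a way that lines up with the index convention of~\eqref{function_f}, and verifying that the various reindexings respect the asserted ordering. A secondary technical point is the measure-zero locus where the spectrum of $w$ degenerates and smooth spectral projections cease to exist on a neighborhood; there the Daleckii--Krein formula still holds pointwise, so the identity extends by continuity or by approximating $w$ by endomorphisms with generically simple spectrum.
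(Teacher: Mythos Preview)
Your approach is correct and genuinely different from the paper's. The paper argues via the second derivative: it quotes from \cite[Equations 6.3.27 and 6.3.33]{Kob87} that
\[
\frac{d^2}{dt^2}\mathcal{M}(tw) = \int_X |\pbar w|^2_{h_t}\,\omega^{n-1} = \sum_{i,j}\int_X e^{t(\lambda_i-\lambda_j)}|\eta^i_j|^2\,\omega^n,
\]
and then simply integrates this ODE twice with the obvious initial conditions $\mathcal{M}(0)=0$ and $\frac{d}{dt}\big|_{t=0}\mathcal{M}(tw)=\int_X\mathrm{tr}(w\Theta_0)\wedge\omega^{n-1}$, setting $t=1$ at the end. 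You instead stay at the level of the first derivative, compute $A_s = \p h_s h_s^{-1} - \p h_0 h_0^{-1}$ explicitly via the Daleckii--Krein divided-difference kernel, integrate by parts once, and then perform the $s$-integral directly. The paper's route is shorter because it outsources the key identity to Kobayashi's book; your route is more self-contained and makes transparent \emph{why} the function $\phi(x)=(e^x-x-1)/x^2$ appears, namely as the $s$-primitive of the divided difference $(e^{sx}-1)/x$. Both arguments ultimately encode the same spectral calculus, just sliced differently: the paper differentiates $\mathcal{M}$ twice and integrates back, while you differentiate the metric once (Daleckii--Krein) and integrate once in $s$.
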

\begin{proof}
The Donaldson functional satisfies the following \cite[Equations 6.3.27 and 6.3.33]{Kob87}: 
\begin{align*}
  &\mathcal{M}(0) = 0 \\
  &\frac{d\mathcal{M}}{dt}\bigg|_{t = 0} = \int_X \mathrm{tr}(w \Theta_0) \wedge \omega^{n-1} \\
  &\frac{d^2 \mathcal{M}}{dt^2} = \int_X | \dbar w |_{h_t}^2 \omega^{n-1} \\
  &\quad \quad \ = \sum_{1 \leq i,j \leq m}\int_X e^{t(\lambda_i - \lambda_j)}| \eta_j^i |^2  \omega^n.
\end{align*}    
For $z \in X$, consider the following system of ODEs, for some $\phi_z : \R_{\geq 0} \to \R $.
\begin{align*}
  \phi_z(0) &= 0 \\
  \phi_z'(0) &= \frac{\mathrm{tr}(w(z) \cdot \Theta_0(z)) \wedge \omega^{n-1}}{\omega^n} \\
  \phi_z''(t) & =\sum_{i,j} e^{t(\lambda_i - \lambda_j)}| \eta_j^i |^2. 
\end{align*}
It is easy to see that the solution to this system is given by \[\phi_z(t) = f_{tw}(z) + \frac{\mathrm{tr}(w(z) \cdot \Theta_0(z)) \wedge \omega^{n-1}}{\omega^n}t.\]
Furthermore, we have that $\mathcal{M}(tw) = \int_X \phi_z(t) \omega^n$. Setting $t=1$, we get the required result. 
\end{proof}

The function $f_w$ makes sense for any $w\in \mathcal{H}^{1,1}\supset \mathcal{H}^{1,p}$, and is always $\geq 0$. This lets us extend the definition of $\mathcal{M}$ as:
\begin{definition}
Given $w\in \mathcal{H}^{1,1}$, we define:
\[
\mathcal{M}(w) := \int_X f_w \omega^n + \int_X \mathrm{tr}(\Theta_0 w)\wedge\omega^{n-1},
\]
if $f_w \in L^{1}(X)$; otherwise, set $\mathcal{M}(w) := +\infty$. We define:
\[
\mathcal{S} := \{w\in \mathcal{H}^{1,1}\ |\ \mathcal{M}(w) < \infty\}.
\]
\end{definition}
As already remarked in the introduction, it will follow from Proposition \ref{propReverseSobolev} that $\mathcal{S}\subset \mathcal{H}^{1,p_{\max}}$.

\begin{proposition}\label{convex}
Suppose that $w\in \mathcal{H}^{1,1}$. Then $\mathcal{M}$ is convex on the weak geodesic ray $(tw)_{t\geq 0}$.
\end{proposition}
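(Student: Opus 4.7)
The plan is to reduce the convexity of $\mathcal{M}$ along the ray $(tw)_{t\ge 0}$ to pointwise convexity of the integrand $f_{tw}(z)$ in $t$, via the formula
\[
\mathcal{M}(tw) = \int_X f_{tw}\,\omega^n + t\int_X \mathrm{tr}(\Theta_0 w)\wedge\omega^{n-1}.
\]
The second term is linear in $t$, so it contributes nothing to convexity, and the problem is entirely about the first.

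The key algebraic observation is that for $t>0$ the eigenvalue block structure of $tw$ at a point $z$ is the same as that of $w$: the eigenvalues scale as $\lambda_i\mapsto t\lambda_i$ (preserving coincidences), the eigenspaces are unchanged, and hence the block matrices of $\pbar(tw)=t\,\pbar w$ satisfy $\eta^i_j(tw)=t\,\eta^i_j(w)$. Substituting into the formula~\eqref{function_f} and canceling $t^2$ in the numerator and denominator, I would record the identity
\[
f_{tw}(z) = \sum_{i,j} |\eta^i_j|^2 \,\phi_{\lambda_i-\lambda_j}(t), \qquad \phi_\lambda(t):=\frac{e^{t\lambda}-t\lambda-1}{\lambda^2},
\]
where $\lambda_i$, $\eta^i_j$ now refer to $w$ at $z$, and $\phi_0(t)$ is interpreted as $t^2/2$. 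A direct differentiation gives $\phi_\lambda''(t)=e^{t\lambda}\ge 0$ for every $\lambda\in\R$, so each $\phi_{\lambda_i-\lambda_j}$ is a convex function of $t$. Since the coefficients $|\eta^i_j|^2$ are nonnegative (and independent of $t$), $t\mapsto f_{tw}(z)$ is pointwise convex for a.e.\ $z\in X$.

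With pointwise convexity in hand, convexity of $t\mapsto\int_X f_{tw}\,\omega^n$ follows by integrating the convexity inequality
\[
f_{((1-s)t_1+st_2)w}(z) \le (1-s)f_{t_1 w}(z)+s f_{t_2 w}(z)
\]
over $X$. This handles both the finite and infinite cases: if either $\mathcal{M}(t_1 w)$ or $\mathcal{M}(t_2 w)$ equals $+\infty$, the convexity inequality holds trivially, while if both are finite then the right-hand side of the pointwise inequality is integrable and dominates $f_{((1-s)t_1+st_2)w}$, forcing finiteness and convexity along the ray. Adding back the linear term completes the proof.

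I do not anticipate a real obstacle; the only subtle point is keeping track of the rescaling of the $\eta^i_j$ under $w\mapsto tw$ so that the $t$-dependence collapses into the single factor $\phi_{\lambda_i-\lambda_j}(t)$, and being careful with the degenerate case $\lambda_i=\lambda_j$ where the convention $\frac{e^x-x-1}{x^2}\big|_{x=0}=\tfrac12$ is needed.
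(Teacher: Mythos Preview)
Your proposal is correct and follows essentially the same approach as the paper: both derive the identity $f_{tw}(z)=\sum_{i,j}|\eta^i_j|^2\,\frac{e^{t(\lambda_i-\lambda_j)}-t(\lambda_i-\lambda_j)-1}{(\lambda_i-\lambda_j)^2}$, observe pointwise convexity in $t$ (the paper cites convexity of $e^x-1-x$, you compute $\phi_\lambda''(t)=e^{t\lambda}$), and then integrate. Your version is slightly more explicit about the rescaling $\eta^i_j(tw)=t\,\eta^i_j(w)$ and about the $+\infty$ case, but the argument is the same.
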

\begin{proof}

Note that
  \[
 f_{tw}(z) := \sum_{i,j=1}^m |\eta^i_j|^2\frac{e^{t(\lambda_i - \lambda_j)}  - t(\lambda_i - \lambda_j) - 1}{(\lambda_i-\lambda_j)^2}.
\]
The convexity of $e^x - 1 - x$ implies that for $t, s \in \R_{\geq 0}$ and $\alpha\in (0,1)$ we have
\[
  f_{(\alpha s + (1-\alpha) t)w} \leq \alpha f_{sw} + (1-\alpha)f_{tw}. 
\]
Thus, we get that
\begin{align*}
  \mathcal{M}\left((\alpha s + (1-\alpha) t) w \right) &= \int_X f_{(\alpha s + (1-\alpha) t)w} \omega^n + (\alpha s + (1-\alpha)t) \int_X \mathrm{tr}(\Theta_0  w)\wedge\omega^{n-1} \\
&\leq \alpha\mathcal{M}(sw) + (1-\alpha)\mathcal{M}(tw). 
\end{align*}

\end{proof}

\begin{definition}
We say that $\mathcal{M}$ is {\em proper} on $\mathcal{S}$ with respect to the $W^{1,p}$-norm if there exists a constant $C > 0$ such that:
\[
\mathcal{M}(w) \geq C^{-1}\norm{w}_{W^{1,p}} - C
\]
for all $w\in \mathcal{S}$.
\end{definition}

If $\mathcal{M}$ is proper with respect to $W^{1,p}$, then clearly it is proper with respect to $W^{1, q}$ for any $1 \leq q < p$.

By Proposition \ref{convex}, $\mathcal{M}$ cannot be bounded below if there exists a weak geodesic ray $(tw)_{t \geq 0}$ along which $\mathcal{M}$ has negative asympotitc slope, i.e.
\[
\lim_{t\rightarrow\infty} \frac{d}{ds}\bigg|_{s=t}\mathcal{M}(sw) < 0.
\] 
Similarly, $\mathcal{M}$ cannot be proper if there exists a weak geodesic ray along which $\mathcal{M}$ has non-positive asymptotic slope.

\subsection{Weakly Holomorphic Projections}

Finally, we recall the weak holomorphicity theorem of Uhlenbeck-Yau (see also \cite{Pop03}):

\begin{theorem}\label{UY}
(Uhlenbeck-Yau \cite{UY86, UY89}) Suppose that $\pi\in \ti{\mathcal{H}}^{1,2}$ is a weakly holomorphic projection, i.e.:
\[
\pi^2 = \pi\quad\text{ and }\quad (I - \pi)\pbar\pi = 0.
\]
Then $\pi$ is actually the projection onto a holomorphic subsheaf of $E$.
\end{theorem}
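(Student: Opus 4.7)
My plan is to produce a coherent analytic subsheaf $\mathcal{F}\subset E$ such that $\pi$ agrees with the orthogonal $h_0$-projection onto $\mathcal{F}$ almost everywhere. The argument will split into three parts: (i) partial regularity of $\pi$, (ii) identification of a holomorphic subbundle on the smooth locus, (iii) extension across the singular set.

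For (i), I would first record that $\pi^2=\pi$ and $\pi^*=\pi$ force the pointwise eigenvalues of $\pi$ to lie in $\{0,1\}$, so $\pi\in L^\infty$ with derivatives $D\pi\in L^2$. Taking the adjoint of $(I-\pi)\pbar\pi=0$ gives the companion identity $(\p\pi)(I-\pi)=0$. Combined with $\pi^2=\pi$, these encode a first-order system for $\pi$ which, after one more differentiation, can be recast as a quasilinear elliptic equation. An elliptic bootstrap should then yield smoothness of $\pi$ on a dense open set $X_0\subset X$, on which $\mathrm{tr}(\pi)$ is locally constant.

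For (ii), on $X_0$ I would set $\mathcal{F}_0:=\mathrm{Im}(\pi|_{X_0})\subset E|_{X_0}$; this is a smooth complex subbundle. For any smooth section $s$ with $\pi s=s$, applying the Leibniz rule to $\pbar s=\pbar(\pi s)$ yields
\[
(I-\pi)\pbar s=(I-\pi)(\pbar\pi)s+(I-\pi)\pi(\pbar s)=0,
\]
since $(I-\pi)(\pbar\pi)=0$ by hypothesis and $(I-\pi)\pi=0$ by $\pi^2=\pi$. Thus the restricted $\pbar$-operator preserves $\mathcal{F}_0$, making it a holomorphic subbundle of $E|_{X_0}$.

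Step (iii) is the main obstacle. The key technical point is to show that the singular set $\Sigma:=X\setminus X_0$ is a complex analytic subvariety of codimension at least $2$; given this, Siu's extension theorem for reflexive subsheaves extends $\mathcal{F}_0$ uniquely to a coherent subsheaf $\mathcal{F}\subset E$, and the $L^\infty$ bound on $\pi$ together with pointwise agreement on $X_0$ then forces $\pi$ to coincide with the orthogonal projection onto $\mathcal{F}$ almost everywhere. Upgrading the a priori only measurable set $\Sigma$ to an analytic subvariety of the correct codimension is delicate and genuinely uses the structure of $(I-\pi)\pbar\pi=0$: in \cite{UY86} this is achieved by an $\varepsilon$-regularization and a careful blow-up/meromorphic reduction argument, while the later proof of Popovici \cite{Pop03} bypasses the smoothness question entirely by using $L^2$-H\"ormander estimates to produce the local holomorphic generators of the subsheaf directly.
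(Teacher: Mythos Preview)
The paper does not prove this statement at all: Theorem~\ref{UY} is simply quoted from the literature, with references to Uhlenbeck--Yau \cite{UY86,UY89} and to Popovici \cite{Pop03}, and is then used as a black box in the proof of Theorem~C. So there is no ``paper's own proof'' to compare your proposal against.

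That said, your outline is a fair summary of how the actual proofs in those references proceed, and you correctly identify that step~(iii)---showing the singular set is analytic of codimension~$\ge 2$, or alternatively producing local holomorphic generators directly via $L^2$ methods as in \cite{Pop03}---is where all the work lies. Your sketch of~(i) and~(ii) is essentially complete, but for~(iii) you have (appropriately) only gestured at the two known strategies rather than carried either one out; since the paper treats the whole theorem as an input, this is not a gap relative to what the paper does.
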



\section{Geodesic Rays Associated to a Filtration}\label{sec:filtrations}


In this section, we show how to construct a natural geodesic ray from a filtration of $E$ and a strictly decreasing set of real numbers $\lambda_1 > \ldots > \lambda_m$; the asymptotic slope of $\mathcal{M}$ along the resulting ray will be computed by the slopes of the filtration, weighted by the $\lambda_i$. 

\subsection{The case of a subbundle filtration}

Fix a Hermitian metric $h_0$ on $E$. Suppose first that we have a filtration of $E$ by holomorphic subbundles $E = E_1 \supset \ldots \supset E_m\supset E_{m+1} = \{0\}$. Let $F_i := E_i/ E_{i+1}$; we have a smooth, orthogonal splitting $E = \bigoplus_{i=1}^m G_i$, such that $G_i \cong F_i$ for each $i=1, \ldots, m-1$, and $G_m = E_m$.

Let $r_i := \mathrm{rk}(F_i)$ and $s_k = \sum_{i=k}^m r_k = \mathrm{rk}(E_k)$. Consider $\{e_i\}_{i=1}^r$, a local holomorphic frame of $E$, such that $\{e_i\}_{i=r - s_k + 1}^r$ frames $E_k$ for each $k = 1, \ldots, m$. Write $\{f_{i}\}_{j=1}^{r}$ for the orthogonal projections of the $\{e_i\}_{i = 1}^{r}$ onto the decomposition $\bigoplus_{i=1}^m G_i$ -- for each $k = 1, \ldots, m$, $\{f_i\}_{i=r-s_k + 1}^{r - s_{k-1}}$ is a (smooth) frame for $G_k$ (note $e_i = f_i$ for $r-s_{m-1}+1 < i\leq r$).

Let $\lambda_1 > \ldots > \lambda_m$ and set $\delta := \min_{i < j} \{ \lambda_i - \lambda_j\} > 0$. Define $w\in \ti{\mathcal{H}}$ by $w(v) = e^{\lambda_i}v$ for all $v\in G_i$, and $h_t := e^{tw}h_0$, $t\geq 0$; by definition, this is a geodesic ray in $\ti{\mathcal{H}}$.

{
\begin{lemma}
\label{lemChernConnectionH0}
Let $D_{0}$ denote the Chern connection of $h_0$. The connection form $\alpha_0$ for $D_{0}$ can be written with respect to the $\{f_i\}$-basis as
  \[ \alpha_0 = 
\begin{pmatrix}
  \beta_{11} \dots \beta_{1m} \\
  \vdots \ddots \vdots \\
  \beta_{m1} \dots \beta_{mm}
\end{pmatrix}.
\]
Using the identification $G_i \simeq F_i$, the $\beta_{ij}$ have the following description. 
\begin{itemize}
\item $\beta_{ii} = \alpha_{F_i,0}$, where $\alpha_{F_i,0}$ is the connection form for $D_{F_i,0}$, the Chern connection of the Hermitian metric induced by $h_{0}$ on $F_i$. 
\item For $i > j$, $\beta_{ij}$ is a $\Hom(F_i, F_j)$-valued $(1,0)$-form (i.e. an $r_i\times r_j$ matrix of $(1,0)$-forms). For $i < j$, $\beta_{ij}$ is a $\Hom(F_j, F_i)$-valued $(0,1)$-form.
\item $\beta_{ji}$ is the $h_0$-adjoint of $-\beta_{ij}$ for $i \neq j$ i.e.~
  \[
h_0(\beta_{ij}v,w) + h_0(v,\beta_{ji}w) = 0 \text{ for all } v \in F_i, w \in F_j.
  \]
\end{itemize}
\end{lemma}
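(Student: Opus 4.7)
The plan is to derive everything from three inputs: that the Chern connection form $\alpha_0^e$ of $h_0$ in the holomorphic frame $\{e_i\}$ is pure $(1,0)$-type, that the transition matrix from $\{e_i\}$ to $\{f_i\}$ has a block-triangular structure dictated by the filtration, and that $D_0$ is compatible with $h_0$.

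First I would set up the change of basis. Writing $f_i$ in the $\{e_j\}$-basis and using that $f_i$ is the $h_0$-orthogonal projection of $e_i$ onto $G_k$ (when $i$ lies in block $k$), the vector $f_i-e_i$ lies in $E_{k+1}$, which is locally spanned by the $e_j$ in deeper blocks. Hence the transition matrix $M$ is block-triangular with identity diagonal blocks, and $\bar\partial M$ is strictly block-triangular. Applying the standard change-of-basis formula for connection forms and using $(\alpha_0^e)^{0,1}=0$, the $(0,1)$-part of $\alpha_0=\alpha_0^f$ reduces to a product of $\bar\partial M$ and $M^{-1}$, which is again strictly block-triangular. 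This immediately forces $(\beta_{ij})^{0,1}=0$ on the diagonal and on one side of it, giving the type claim for $\beta_{ij}$ with $i>j$ and simultaneously showing that each $\beta_{ii}$ is pure $(1,0)$.

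To handle the remaining off-diagonal blocks and the adjoint relation simultaneously, I would invoke compatibility of $D_0$ with $h_0$. Since the splitting $E=\bigoplus G_k$ is $h_0$-orthogonal, the Gram matrix of $h_0$ in the $\{f_i\}$-basis is block-diagonal; differentiating $h_0(f_i,f_j)$ block-by-block yields $h_0(\beta_{ij}v,w)+h_0(v,\beta_{ji}w)=0$ for $i\neq j$, which is precisely the adjointness claim. Combined with the fact, already established, that $\beta_{ji}$ is pure $(1,0)$ for $j>i$, taking the $h_0$-adjoint (which swaps type) shows that $\beta_{ij}$ is pure $(0,1)$ when $i<j$, completing the remaining type claim.

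Finally, to identify each diagonal block $\beta_{ii}$ with $\alpha_{F_i,0}$, I would invoke the standard principle that the Chern connection of a Hermitian quotient bundle is obtained by orthogonally projecting the ambient Chern connection. Via the $h_0$-orthogonal isomorphism $G_i\cong F_i$, which sends each $f_j$ with $j$ in block $i$ to the holomorphic frame element $[e_j]\in F_i$, the $G_i$-component of $D_0 f_j$ encapsulates exactly the block $\beta_{ii}$, and this matches $\alpha_{F_i,0}$ in the $\{[e_j]\}$-frame. The only step requiring careful attention is the bookkeeping of block indices and the direction of triangularity; in particular, one must confirm that the smooth subbundle $G_i$ inherits the correct holomorphic structure from $F_i=E_i/E_{i+1}$ under the identification. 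Everything else is standard linear algebra applied to Chern connections.
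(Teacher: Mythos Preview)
Your argument is correct and complete. The approach differs from the paper's proof, which handles the first two bullet points by induction on $m$, invoking Kobayashi's Propositions~1.6.4--1.6.6 for the base case $m=2$, and then derives the adjoint relation exactly as you do from metric compatibility.

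Your route is more explicit: you avoid the induction by directly analyzing the transition matrix $M$ from the holomorphic frame $\{e_i\}$ to the orthogonal frame $\{f_i\}$, observing that it is block-triangular with identity diagonal blocks (since $e_i-f_i\in E_{k+1}$), and then reading off the $(0,1)$-part of $\alpha_0^f$ from the change-of-basis formula $\alpha_0^f = M^{-1}\alpha_0^e M + M^{-1}dM$ together with $(\alpha_0^e)^{0,1}=0$. This simultaneously yields the type constraint on one side of the diagonal and the purity of the diagonal blocks; the other off-diagonal type then falls out of the adjoint relation, which you establish identically to the paper. For the identification $\beta_{ii}=\alpha_{F_i,0}$ you appeal to the standard projection description of the Chern connection on a Hermitian quotient, which is precisely the content of the Kobayashi propositions the paper cites---so here the two proofs genuinely converge. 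The upshot is that your argument is a self-contained unpacking of what the paper's induction-plus-citation encodes; it buys transparency at the cost of a little more bookkeeping, while the paper's version is terser but leans on the reader knowing the $m=2$ case.
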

\begin{proof}
  The first and second part of the claim can by easily proved by induction on $m$. In the case of $m = 2$, the proof can be found in \cite[Propositions 1.6.4-1.6.6]{Kob87}.

  To see that last part of the claim, let $v \in F_i$ and $w \in F_j$. By abuse of notation, let $v, w$ also denote their respective images in $E$ under the identifications $F_i \simeq G_i$ and $F_j \simeq G_j$. Then, $h_0(v,w) = 0$ and we get
  \begin{align*}
    dh_0(v,w) &= h_0(D_{0}v,w) + h_0(v,D_{0}w) \\
    0 &= \sum_k h_0(\beta_{ik}v,w) + \sum_l h_0(v, \beta_{jl}w) \\
    0 &= h_0(\beta_{ij}v,w) + h_0(v,\beta_{ji}w). 
  \end{align*}
\end{proof}

  \begin{lemma}
 \label{lemChernConnectionHt}
  Let $\alpha_t$ be the connection form of $D_t$, the Chern connection of $h_t$. Then, the  matrix of $\alpha_t$ can be written as 
  $\alpha_t = \left( \beta_{t,ij} \right)$ with respect to the $\{f_i\}$-basis, where,
  \[
    \beta_{t,ij} =
    \begin{cases}
      e^{-t(\lambda_j - \lambda_i)} \beta_{ij} &\text{ if } i > j \\
\beta_{ij} \text{ otherwise,}
    \end{cases}
  \]
and $\alpha_0 = (\beta_{ij})$, as in Lemma \ref{lemChernConnectionH0}.
\end{lemma}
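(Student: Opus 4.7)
The plan is to compute $\alpha_t$ directly in the $\{f_i\}$-frame, exploiting the type decomposition provided by Lemma~\ref{lemChernConnectionH0}. First I would choose the $\{f_i\}$-frame so that it is $h_0$-orthonormal on each $G_k$ (possible since each $G_k$ is a smooth subbundle with restricted metric $h_0|_{G_k}$). In such a frame, the matrix of $h_0$ is the identity, and the matrix of $h_t$ is the diagonal matrix $H_t = \mathrm{diag}\bigl(e^{t\lambda_{i(a)}}\bigr)_a$, where $i(a)$ denotes the block index with $f_a \in G_{i(a)}$.

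Two observations then do most of the work. First, the $(0,1)$-part of the Chern connection of any Hermitian metric on $E$ is the Dolbeault operator $\bar\partial_E$, whose matrix in the $\{f_i\}$-frame depends only on the holomorphic structure of $E$, not on the metric. Hence $\alpha_t^{0,1} = \alpha_0^{0,1}$ entry-wise, which immediately yields $\beta_{t,ij} = \beta_{ij}$ for the purely $(0,1)$ blocks, i.e.\ for $i<j$. Second, Hermitian compatibility of the Chern connection applied to $(f_a, f_b)$ gives
\[
0 = d(H_t)_{ab} = (\alpha_t)^b_a \, e^{t\lambda_{i(b)}} + \overline{(\alpha_t)^a_b} \, e^{t\lambda_{i(a)}},
\]
where the left-hand side vanishes either because $a \ne b$ (so $(H_t)_{ab}=0$) or because $(H_t)_{aa}$ is a constant function on $X$. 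Splitting into $(1,0)$- and $(0,1)$-components and comparing with the $t=0$ case yields $(\alpha_t^{1,0})^b_a = e^{t(\lambda_{i(a)} - \lambda_{i(b)})} (\alpha_0^{1,0})^b_a$, which translates into the claimed rescaling $\beta_{t,ij} = e^{-t(\lambda_j - \lambda_i)} \beta_{ij}$ for $i > j$, and $\beta_{t,ii} = \beta_{ii}$ for the diagonal blocks (the exponent vanishes there).

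The main technical point is bookkeeping rather than a genuine obstacle: the lemma's $\beta_{ij}$ sits in the $(j,i)$-entry of the full connection matrix and its $\Hom$ direction depends on whether $i>j$ or $i<j$, so one must translate carefully between the matrix-entry formula $(\alpha_t^{1,0})^b_a$ and the block notation $\beta_{t,ij}$. Choosing the $\{f_i\}$-frame orthonormal as above eliminates Gram matrices and makes this translation almost automatic.
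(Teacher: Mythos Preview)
Your approach is correct and is essentially the same as the paper's: both exploit that the $(0,1)$-part of any Chern connection is $\bar\partial_E$ (hence independent of $t$) and then pin down the $(1,0)$-part via Hermitian compatibility, the only difference being that the paper \emph{verifies} the guessed $\alpha_t$ satisfies these two properties while you \emph{derive} $\alpha_t$ from them. One small point: the paper's $\{f_i\}$-frame is fixed (orthogonal projections of a holomorphic frame onto the $G_k$) and is generally not $h_0$-orthonormal, so your orthonormality reduction should be accompanied either by the observation that the scalar relation $\beta_{t,ij}=c(t)\beta_{ij}$ is invariant under block-diagonal changes of frame, or by simply running your compatibility computation with a block-diagonal Gram matrix $H_0$ in place of the identity---the argument goes through unchanged.
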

\begin{proof}
  It is enough to check that $D_t := d + \alpha_t$ satisfies the properties of the Chern connection. First we check that $D_t'' = \dbar_E$. It is enough to check that if $v \in G_i$, then $D''_tv = \dbar_E v$. We know that $D_t''v$ is the $(0,1)$ part of $D_tv = dv + \sum_j \beta_{t,ij} v$. Using Lemma \ref{lemChernConnectionH0}, we get that
\begin{align*}
  D_t''v &= D_{F_i}'' v + \sum_{j > i} \beta_{t,ij}v \\
         &= D_{F_i}'' v + \sum_{j > i} \beta_{ij}v \\
  &= D_0''v.
\end{align*}
Since $D_0$ is the Chern connection on $E$ with respect to $h_0$, we have that $D_t'' v = D_0''v = \dbar_E v$.

To show that $D_t$ is compatible with $h_t$, we need to show that
\[
d h_t(v,w) = h_t(D_tv,w) + h_t(v,D_tw), 
\]
for $v,w \in E$. It is enough show this in the case when $v \in G_i$ and $w \in G_j$. If $i = j$, then
\begin{align*}
  d h_t(v,w) &= e^{t\lambda_i} d h_0(v,w) \\
             &= e^{t\lambda_i} \left( h_0(D_0v,w) + h_0(v,D_0w) \right) \\
             &= e^{t\lambda_i} \left( h_0(D_{F_i}v,w) + h_0(v,D_{F_i}w) \right) \\
             &= h_t(D_tv,w) + h_t(v,D_tw). 
\end{align*}
Now consider the case of $i \neq j$. Without loss of generality, assume that $i > j$. We have that $d h_t(v,w) = 0$ and the right hand side is given by
\begin{align*}
  h_t(D_tv,w) + h_t(v,D_tw) &= h_t(\beta_{t,ij}v,w) + h_t(v,\beta_{ji}w) \\
                            &= e^{t\lambda_j}h_0(e^{-t(\lambda_j-\lambda_i)} \beta_{ij}v,w) + e^{t\lambda_i} h_0(v, \beta_{ji}w) \\
                            &= e^{t\lambda_i} (h_0(\beta_{ij}v,w) + h_0(v,\beta_{ji}w)) \\
  &= 0.
\end{align*}
where the last equality follows from Lemma \ref{lemChernConnectionH0}. 
\end{proof}

\begin{theorem}\label{thmMAlongGeodesic}
  \label{slope}
Along the geodesic ray $h_t$, the Donaldson functional is given by
\begin{align*}
  \mathcal{M}(tw) = 2\pi\sum_{i=1}^m \lambda_i \mathrm{rk}(F_i) (\mu_{F_i} - \mu_E)t - \sum_{1 \leq i < j \leq m} B_{ji} (1 - e^{-t(\lambda_i - \lambda_j)}),
\end{align*}
where $B_{ji} = \int_{X}| \beta_{ji} |_{h_0}^2 \omega^n$ is a non-negative integer. 
\end{theorem}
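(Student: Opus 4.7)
The plan is to use the first variation formula
\[
\frac{d}{ds}\mathcal{M}(sw) = \int_X \mathrm{tr}(w\, \Theta_s)\wedge \omega^{n-1}
\]
(valid for a linear ray, since $v_s = \dot h_s h_s^{-1} = w$) and integrate in $s \in [0,t]$, exploiting the fact that Lemma \ref{lemChernConnectionHt} gives $\alpha_s$, and hence $\Theta_s$, in completely explicit block form.

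From $\Theta_s = d\alpha_s + \alpha_s\wedge\alpha_s$, I would extract the diagonal $(i,i)$-block in the frame $\{f_k\}$. The diagonal piece $\beta_{s,ii} = \beta_{ii}$ gives the Chern curvature $\Theta_{F_i,0}$ of the induced metric on $F_i$, while each cross-term picks up exactly one exponential factor from Lemma \ref{lemChernConnectionHt}. A short verification shows that for any $k \neq i$ this factor is $e^{-s|\lambda_i - \lambda_k|}$, regardless of whether $k<i$ or $k>i$, giving
\[
(\Theta_s)_{ii} = \Theta_{F_i,0} + \sum_{k\neq i} e^{-s|\lambda_i-\lambda_k|}\, \beta_{ki}\wedge\beta_{ik}.
\]

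Because $w$ is block-scalar with eigenvalue $\lambda_i$ on $G_i$, we have $\mathrm{tr}(w\,\Theta_s) = \sum_i \lambda_i\mathrm{tr}((\Theta_s)_{ii})$. Pairing the $(i,k)$ and $(k,i)$ cross contributions across an unordered pair $i<k$ and using anticommutativity of the wedge of one-forms combined with cyclicity of the trace, $\mathrm{tr}(\beta_{ik}\wedge\beta_{ki}) = -\mathrm{tr}(\beta_{ki}\wedge\beta_{ik})$, the combined pair-contribution collapses to $(\lambda_i-\lambda_k) e^{-s(\lambda_i-\lambda_k)}\mathrm{tr}(\beta_{ki}\wedge\beta_{ik})$. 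Integrating in $s$ over $[0,t]$ yields the factor $1 - e^{-t(\lambda_i-\lambda_j)}$ (after relabeling $k=j$) appearing in the statement. The diagonal piece $\int_X \mathrm{tr}(\Theta_{F_i,0})\wedge\omega^{n-1}$ is identified with $2\pi\,\mathrm{rk}(F_i)\,\mu_{F_i}$ by Chern-Weil, and the trace-free condition $\sum_i\lambda_i\,\mathrm{rk}(F_i) = \mathrm{tr}(w) = 0$ lets me freely subtract $\mu_E$, yielding the linear-in-$t$ slope term exactly as stated.

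The final ingredient is identifying $\int_X \mathrm{tr}(\beta_{ki}\wedge\beta_{ik})\wedge\omega^{n-1}$ with $-B_{ki}$. Using the adjoint relation $\beta_{ik} = -\beta_{ki}^*$ from Lemma \ref{lemChernConnectionH0}, this trace becomes $-\mathrm{tr}(\beta_{ki}\wedge\beta_{ki}^*)$, and the pointwise positivity identity for a matrix-valued $(1,0)$-form $\eta$ --- parallel to the formula $|\eta^i_j|^2 = n\sqrt{-1}\,\mathrm{tr}(\eta\wedge\eta^*)\wedge\omega^{n-1}/\omega^n$ used to define $f_w$ --- produces $-\int_X |\beta_{ki}|^2_{h_0}\omega^n = -B_{ki}$, making the non-negativity of $B_{ki}$ transparent. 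The main obstacle is purely bookkeeping: tracking $\sqrt{-1}$ and $2\pi$ factors consistently with the paper's curvature normalization $\Theta = \sqrt{-1}\pbar((\partial h)h^{-1})$, and matching signs both in the Chern-Weil identity and in the pointwise norm identity above.
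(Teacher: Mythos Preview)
Your approach is essentially identical to the paper's: compute the diagonal blocks $(\Theta_s)_{ii}$ from $\Theta_s=d\alpha_s+\alpha_s\wedge\alpha_s$ using Lemma~\ref{lemChernConnectionHt}, contract against the block-scalar $w$, integrate in $s$, and then identify the linear term via Chern--Weil and the exponential terms via the adjoint relation $\beta_{ij}=-\beta_{ji}^*$.

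One small correction: you invoke ``the trace-free condition $\sum_i\lambda_i\,\mathrm{rk}(F_i)=\mathrm{tr}(w)=0$'' to insert the $-\mu_E$. In the setup of~\S\ref{sec:filtrations}, however, $w\in\widetilde{\mathcal{H}}$ with arbitrary $\lambda_1>\cdots>\lambda_m$, so $\mathrm{tr}(w)$ need not vanish (and does not, for instance, for the specific ray in Theorem~A). The paper instead writes the Donaldson functional along the ray with an explicit normalization term $-\gamma\,t\int_X\mathrm{tr}(w)\,\omega^n$, and it is this term (using $\gamma=2\pi n\mu_E/\int_X\omega^n$) that produces the $-\mu_E$ in the final formula. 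Your argument goes through verbatim once you either restrict to trace-free $w$ or restore this normalization term in the first-variation formula.
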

\begin{proof}
  Write $\Theta_t$ for the curvature of $h_t$. In the $\{f_i\}$-basis, we can write $\Theta_t = (\theta_{t,ij})$, for some $\Hom(F_i, F_j)$-valued $(1,1)$-forms $\theta_{t,ij}$.
  
  Recall that the Donaldson functional along a geodesic is given by
  \[
\mathcal{M}(tw) = \int_0^t \int_X \mathrm{tr}(w \Theta_s) \wedge \omega^{n-1} ds - c \int_X \mathrm{tr}(w) \omega^n. 
  \]
   Since the matrix for $w$ in the $f$-basis is  $\mathrm{Diag}(\lambda_1\mathrm{Id}_{r_1}, \ldots, \lambda_m\mathrm{Id}_{r_m})$, we can locally write:
  \begin{equation}\label{eqnhellothere}
     \int_0^t \mathrm{tr}(w\Theta_s) ds = \sum_i \lambda_i \int_0^t \mathrm{tr}(\theta_{s,ii}). 
  \end{equation}
  Using Lemma \ref{lemChernConnectionHt}, we can compute $\theta_{s,ii}$ by using the local expression for the curvature $\Theta_s = d\alpha_s + \alpha_s\wedge\alpha_s$, which gives:
  \begin{align*}
    \theta_{s,ii} &= \Theta_{i} + \sqrt{-1}\sum_{i\not= j} e^{-s|\lambda_i - \lambda_j|}\beta_{ji}\wedge\beta_{ij}, 
  \end{align*}
  writing $\Theta_i$ for the curvature of the metric induced on $F_i$ by $h_0$. We can now integrate \eqref{eqnhellothere} in $s$ to get:
  \begin{align*}
  \int_0^t \mathrm{tr}(w \Theta_s) ds
  &= \sum_i t\lambda_i \mathrm{tr} (\Theta_{i}) + \sqrt{-1}\sum_{i \neq j} \frac{\lambda_i (1-e^{-t|\lambda_i-\lambda_j|}) }{|\lambda_i - \lambda_j|} \mathrm{tr}(\beta_{ji}\wedge \beta_{ij}).
\end{align*}
   This can be simplified by using the relations $\beta_{ij} = -\beta_{ji}^*$ and $\lambda_j < \lambda_i$ for $i < j$ to give:
\begin{equation}\label{eqnpointless}
  \int_0^t \mathrm{tr}(w \Theta_s) ds
  = t\sum_i \lambda_i \mathrm{tr} (\Theta_{i}) - \sqrt{-1}\sum_{i < j} (1-e^{-t(\lambda_i-\lambda_j)}) \mathrm{tr}(\beta_{ji}\wedge \beta_{ji}^*).
\end{equation}

Now the second term in $\mathcal{M}(tw)$ is given by
\begin{align*}
  \gamma\cdot t\int_X \mathrm{tr}(w) \omega^n &= \gamma \cdot t \sum_i \lambda_i r_i \int_X\omega^n \\
  &= 2 \pi \lambda_i r_i \mu_E t,
\end{align*}
so by integrating \eqref{eqnpointless} over $X$ and combining the two expressions, we get the result. 
\end{proof}

}
\begin{corollary}\label{2stage_filtration}
Consider a two-step filtration given by a subbundle $E_2 \subset E_1 = E$.Set $\lambda_1 = 1, \lambda_2 = 0,$ and $F = E_1/E_2$, and let $h_t$ be the geodesic ray constructed in Theorem \ref{slope} from this data. Then:
\[
\mathcal{M}(h_t) = 2\pi \mathrm{rk}(F) (\mu_F - \mu_E)t - B (1-e^{-t}),
\]
for some non-negative constant $B$ depending only on $h_0$. It follows that:
\[
\lim_{t\rightarrow\infty} \frac{d}{ds}\Big|_{s=t} \mathcal{M}(h_s) = 2\pi \mathrm{rk}(F) (\mu_{F} - \mu_{E}).
\]

We conclude that if $E_2$ is a destabilizing  subbundle of $E$, then $\mathcal{M}$ has negative slope along $h_t$; if $\mu_{E_2} = \mu_E$, then $\mathcal{M}$ is monotonically decreasing along $h_t$, with asymptotic slope 0. 

Moreover, we see that $\mathcal{M}$ is zero along $h_t$ if and only if there exists a holomorphic splitting $E = E_2 \oplus E_2^{\perp_{h_0}}$ with $\mu_{E_2} = \mu_E$.
\end{corollary}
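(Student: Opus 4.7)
The plan is to specialize Theorem~\ref{slope} directly to the two-step filtration and then read off each claim. Setting $m=2$, $\lambda_1=1$, $\lambda_2=0$, $F_1 = E/E_2 = F$, $F_2 = E_2$, the general formula immediately gives
\[
\mathcal{M}(h_t) = 2\pi\bigl[\mathrm{rk}(F)(\mu_F - \mu_E) + 0\cdot \mathrm{rk}(E_2)(\mu_{E_2}-\mu_E)\bigr]t - B_{21}(1-e^{-t}),
\]
so we take $B := B_{21} = \int_X |\beta_{21}|_{h_0}^2 \omega^n \geq 0$. Differentiating, $\frac{d}{dt}\mathcal{M}(h_t) = 2\pi\mathrm{rk}(F)(\mu_F-\mu_E) - Be^{-t}$, which tends to $2\pi\mathrm{rk}(F)(\mu_F-\mu_E)$ as $t\to\infty$, giving the asymptotic slope.

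For the stability statements I would use the standard slope identity $\mathrm{rk}(E)\mu_E = \mathrm{rk}(E_2)\mu_{E_2} + \mathrm{rk}(F)\mu_F$: if $E_2$ is destabilizing then $\mu_{E_2} > \mu_E$ forces $\mu_F < \mu_E$, so the asymptotic slope is strictly negative. If instead $\mu_{E_2} = \mu_E$, the same identity gives $\mu_F = \mu_E$, the linear term in the formula vanishes, and $\mathcal{M}(h_t) = -B(1-e^{-t})$; since $\frac{d}{dt}\mathcal{M}(h_t) = -Be^{-t}\leq 0$, the functional is monotonically decreasing with asymptotic slope $0$.

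For the final equivalence, suppose $\mathcal{M}(h_t) \equiv 0$. Taking $t\to\infty$ in the derivative forces $\mu_F = \mu_E$, after which $Be^{-t}\equiv 0$, so $B=0$. The vanishing $B = \int_X |\beta_{21}|_{h_0}^2\,\omega^n = 0$ forces $\beta_{21}\equiv 0$ (and hence $\beta_{12}=-\beta_{21}^*\equiv 0$ by Lemma~\ref{lemChernConnectionH0}), so the Chern connection form $\alpha_0$ of $h_0$ is block diagonal in the $\{f_i\}$-basis. This means the smooth orthogonal splitting $E = E_2^{\perp_{h_0}} \oplus E_2$ is preserved by $D_0$, hence in particular by $\dbar_E$, which makes $E_2^{\perp_{h_0}}$ a holomorphic subbundle and gives the holomorphic splitting $E = E_2 \oplus E_2^{\perp_{h_0}}$ with $\mu_{E_2}=\mu_E$. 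Conversely, if such a holomorphic splitting exists with $\mu_{E_2}=\mu_E$, one checks directly from the formula (with $\beta_{21}=0$ in a holomorphic frame compatible with the splitting) that both terms in $\mathcal{M}(h_t)$ vanish.

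The only non-routine step is the extraction of holomorphicity from $B=0$; everything else is bookkeeping in Theorem~\ref{slope}. That step is mild, since $\beta_{21}$ is continuous and the vanishing of its $L^2$-norm directly yields pointwise vanishing, at which point the block-diagonal form of $\alpha_0$ makes $E_2^{\perp_{h_0}}$ manifestly $\dbar_E$-invariant.
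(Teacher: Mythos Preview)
Your proof is correct and follows essentially the same approach as the paper: both specialize Theorem~\ref{slope} with $m=2$, read off the formula and asymptotic slope, use the additivity of degree to relate $\mu_F$ and $\mu_{E_2}$, and characterize $\mathcal{M}\equiv 0$ via vanishing of the second fundamental form. The only difference is that where the paper cites \cite[Proposition~1.6.14]{Kob87} for ``$B=0 \Leftrightarrow$ holomorphic splitting $E=E_2\oplus E_2^{\perp_{h_0}}$'', you unpack this directly by noting that $\beta_{21}\equiv 0$ makes $\alpha_0$ block diagonal in the $\{f_i\}$-frame, whence $E_2^{\perp_{h_0}}$ is $\dbar_E$-invariant; this is exactly the content of that reference.
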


\begin{proof}
  The expression for $\mathcal{M}(h_t)$ and the asymptotic slope follow from Theorem \ref{thmMAlongGeodesic}. If $E_2$ is destabilizing, then $\mu_{E_2} > \mu_E$ and $\mu_F < \mu_E$, and the asymptotic slope is negative. If $\mu_{E_2} = \mu_E$, then $\mu_E = \mu_F$ and $\mathcal{M}(h_t) = -B(1-e^{-t})$ is decreasing in $t$ as $B \geq 0$.

 Finally, since $B = \| \beta \|_{h_0}^2$, where $\beta$ is the second fundamental form of $E_2$ in $(E,h_0)$, we have $B = 0$ if and only if there is a holomorphic splitting $E = E_2 \oplus E_2^{\perp_{h_0}}$ (see \cite[Proposition 1.6.14]{Kob87}). If $E = E_2 \oplus E_{2}^{\perp_{h_0}}$ is a holomorphic splitting and $\mu_{E_2} = \mu_E$, then $\mu_E = \mu_F$ and $B = 0$ and $\mathcal{M}$ is zero along $h_t$. Conversely, if $\mathcal{M}$ is zero along $h_t$, then $\mu_E = \mu_F$ and $B = 0$, which implies that $E = E_2 \oplus E_2^{\perp_{h_0}}$ is a holomorphic splitting with $\mu_E = \mu_{E_2}$. 
  
\end{proof}

We also have:
\begin{corollary}
  \label{corNegSlope}
Suppose that $\{E_i\}_{i=1}^m$ is a filtration of $E$, and $h_t$ is a geodesic of the form constructed in Theorem \ref{slope}, for some $\lambda_1 > \ldots \lambda_m > 0$. If $\mathcal{M}(h_t)$ has non-positive asymptotic slope, then at least one of the $E_i$ is such that $\mu_{E_i} \geq \mu_E$.
\end{corollary}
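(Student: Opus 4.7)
\noindent\textbf{Proof proposal for Corollary~\ref{corNegSlope}.}
The plan is to argue by contrapositive: assuming that $\mu_{E_i} < \mu_E$ for all $i = 1,\ldots,m$ (recall $E_1=E$ gives equality, so effectively for $i=2,\ldots,m$), show that the asymptotic slope of $\mathcal{M}(h_t)$ is strictly positive. From Theorem~\ref{thmMAlongGeodesic}, the decaying term $\sum_{i<j} B_{ji}(1 - e^{-t(\lambda_i - \lambda_j)})$ contributes $0$ to the asymptotic slope, so it suffices to analyze
\[
\sigma := 2\pi \sum_{i=1}^m \lambda_i\, \mathrm{rk}(F_i)\, (\mu_{F_i} - \mu_E).
\]

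The key algebraic step is to convert this sum over the quotients $F_i = E_i/E_{i+1}$ into a sum over the subsheaves $E_k$, since our hypothesis is phrased in terms of the latter. I will use summation by parts. Writing $r_i := \mathrm{rk}(F_i)$ and noting that $\deg$ is additive in short exact sequences, one has $\mathrm{rk}(E_k)\mu_{E_k} = \sum_{i=k}^m r_i \mu_{F_i}$, so
\[
\tilde B_k := \sum_{i=k}^m r_i(\mu_{F_i} - \mu_E) = \mathrm{rk}(E_k)(\mu_{E_k} - \mu_E),
\]
with $\tilde B_{m+1} = 0$ and $\tilde B_1 = \mathrm{rk}(E)(\mu_E - \mu_E) = 0$. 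Expressing $r_i(\mu_{F_i}-\mu_E) = \tilde B_i - \tilde B_{i+1}$ and telescoping yields
\[
\sigma = 2\pi \sum_{i=1}^m \lambda_i(\tilde B_i - \tilde B_{i+1}) = 2\pi\bigl(\lambda_1 \tilde B_1 + \sum_{k=2}^m (\lambda_k - \lambda_{k-1})\tilde B_k\bigr) = 2\pi\sum_{k=2}^m (\lambda_k - \lambda_{k-1})\mathrm{rk}(E_k)(\mu_{E_k} - \mu_E).
\]

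Now the conclusion is immediate: by hypothesis $\lambda_{k-1} > \lambda_k$, so $\lambda_k - \lambda_{k-1} < 0$, and $\mathrm{rk}(E_k) > 0$. If every $\mu_{E_k} < \mu_E$ for $k = 2,\ldots,m$, then each summand is a product of two strictly negative factors, hence strictly positive, so $\sigma > 0$, contradicting the assumption that the asymptotic slope is non-positive. Thus at least one $E_k$ (with $k \geq 2$, or trivially $k=1$) satisfies $\mu_{E_k} \geq \mu_E$.

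There is no real obstacle here; the only point requiring some care is the bookkeeping in the summation by parts, specifically recognizing that the boundary term $\lambda_1 \tilde B_1$ vanishes because $E_1 = E$, which is what allows the trivially-satisfied $i=1$ inequality to drop out and forces a nontrivial member of the filtration to be non-destabilized (in the weak sense $\mu_{E_k} \geq \mu_E$).
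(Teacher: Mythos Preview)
Your argument is correct. The summation by parts is clean and the bookkeeping is right: the boundary terms $\tilde B_1$ and $\tilde B_{m+1}$ both vanish, giving the identity
\[
\sum_{i=1}^m \lambda_i\, r_i(\mu_{F_i}-\mu_E) = \sum_{k=2}^m (\lambda_k - \lambda_{k-1})\,\mathrm{rk}(E_k)(\mu_{E_k}-\mu_E),
\]
from which the contrapositive follows immediately.

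The paper's proof is organized differently. Rather than writing down the Abel summation identity, it isolates the purely combinatorial statement as a separate lemma (Lemma~\ref{lemAlphaSumPositive}): if $\lambda_1>\cdots>\lambda_m$, $\sum a_i=0$, and $\sum \lambda_i a_i\le 0$, then some tail sum $a_i+\cdots+a_m$ is $\ge 0$. That lemma is proved by induction on $m$, and the corollary follows by taking $a_i=r_i(\mu_{F_i}-\mu_E)$ and noting that the tail sum equals $\mathrm{rk}(E_i)(\mu_{E_i}-\mu_E)$. Your Abel summation is essentially the closed-form version of that induction: it proves the lemma in one line and yields the explicit expression for $\sigma$ in terms of the $\mu_{E_k}$, which is arguably more transparent. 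The paper's formulation, on the other hand, packages the combinatorics into a reusable statement (it is invoked again later, in the proof of Theorem~C via Corollary~\ref{corNegSlope}) and also records the strict-inequality variant.
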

\begin{proof}
  Using Lemma \ref{slope}, we have that
  \[
\sum_i \lambda_i r_i (\mu_{F_i} - \mu_E) \leq 0. 
\]
We need to show that $\mu_{E_i} \geq \mu_E$ for some $2 \leq i \leq m$. Note that $\deg(E_i) = \sum_{j=i}^m \deg(F_i) = \sum_{j=i}^m r_i \mu_{F_i}$ i.e.~we need to show that
\begin{align*}
\frac{r_i \mu_{F_i}+ r_{i+1} \mu_{F_{i+1}} + \dots r_m \mu_{F_m}}{r_i + r_{i+1} + \dots r_m} \geq \mu_{E}. 
\end{align*}

Rearranging, we see that it is enough to show that
\[
 \sum_{j=i}^m r_j(\mu_{F_j}- \mu_E) \geq 0. 
\]
for some $2 \leq i \leq m$. Applying Lemma \ref{lemAlphaSumPositive} with $a_i = r_i(\mu_{F_i} - \mu_E)$, we get the required result.
\end{proof}

\begin{lemma}
\label{lemAlphaSumPositive}
Let $\lambda_1,\ldots,\lambda_m, a_1,\ldots,a_m \in \R$ such that $\lambda_1 > \ldots > \lambda_m$, $\sum_i a_i = 0$, and $\sum_i \lambda_i a_i \leq 0$ (respectively $< 0$). Then, there exist  $2 \leq i \leq m$ such that $a_i + \dots  + a_m \geq 0$ (respectively $ > 0$). 
\end{lemma}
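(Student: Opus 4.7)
The plan is to use Abel (summation by parts) to rewrite $\sum_i \lambda_i a_i$ as a weighted sum of the tail sums $S_k := a_k + a_{k+1} + \cdots + a_m$ with \emph{positive} weights, and then argue by contradiction.

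First, I would set $S_{m+1} := 0$ and $S_k := \sum_{j=k}^m a_j$ for $1 \le k \le m$, noting that the hypothesis $\sum_i a_i = 0$ gives $S_1 = 0$. Also write $\mu_k := \lambda_k - \lambda_{k+1} > 0$ for $1 \le k \le m-1$, so that $\lambda_i = \lambda_m + \sum_{k=i}^{m-1} \mu_k$.

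Next, using $\sum a_i = 0$ and swapping the order of summation,
\begin{align*}
\sum_{i=1}^m \lambda_i a_i
&= \lambda_m \sum_{i=1}^m a_i + \sum_{i=1}^m a_i \sum_{k=i}^{m-1} \mu_k
= \sum_{k=1}^{m-1} \mu_k \sum_{i=1}^k a_i
= -\sum_{k=1}^{m-1} \mu_k\, S_{k+1},
\end{align*}
since $\sum_{i=1}^k a_i = -S_{k+1}$ by $\sum_i a_i = 0$. Thus
\[
\sum_{i=1}^m \lambda_i a_i = -\sum_{k=2}^m \mu_{k-1}\, S_k,
\]
with $\mu_{k-1} > 0$ for every $k \in \{2,\dots,m\}$.

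Finally, the conclusion is a contradiction argument. If $\sum_i \lambda_i a_i \le 0$ but $S_k < 0$ for every $2 \le k \le m$, then the right-hand side of the displayed identity would be strictly positive, which is absurd; hence some $S_k \ge 0$. If instead $\sum_i \lambda_i a_i < 0$ but $S_k \le 0$ for every such $k$, then the right-hand side would be $\ge 0$, again a contradiction, so some $S_k > 0$. I do not anticipate a substantive obstacle here; the only care needed is the bookkeeping of indices, and making sure the Abel rearrangement uses the boundary condition $\sum_i a_i = 0$ so that the $\lambda_m$-term drops out and all the weights $\mu_{k-1}$ end up positive.
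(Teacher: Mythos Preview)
Your argument is correct. The Abel summation identity
\[
\sum_{i=1}^m \lambda_i a_i = -\sum_{k=2}^{m} (\lambda_{k-1}-\lambda_k)\, S_k
\]
is exactly what you claim, and the contradiction step handles both the non-strict and strict cases cleanly.

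This is a genuinely different route from the paper's proof, which proceeds by induction on $m$: the base case $m=2$ is handled by normalising $\lambda_1+\lambda_2=0$, and the inductive step either takes $i=m$ (if $a_m\ge0$) or absorbs $a_m$ into $a_{m-1}$ using $\lambda_{m-1}a_m<\lambda_m a_m$ and applies the hypothesis to the shorter sequence. Your approach is more direct and arguably more transparent: the summation-by-parts identity exhibits $\sum_i\lambda_i a_i$ as a positively-weighted linear combination of the very tail sums $S_k$ you want to control, so the conclusion is immediate and the strict case requires no separate tracking. The paper's induction, on the other hand, avoids any rearrangement trick and may feel more elementary to a reader unfamiliar with Abel summation, at the cost of a slightly longer case analysis.
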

\begin{proof}
  We prove the result by induction on $m$ for the non-strict inequality. The case of the strict inequality follows similarly. First consider the base case when $m = 2$. Since replacing $(\lambda_1,\lambda_2)$ by $(\lambda_1+c,\lambda_2+c)$ for $c \in \R$ leaves the sum $\lambda_1 a_1 + \lambda_2 a_2$ unchanged, we may assume that $\lambda_1 + \lambda_2 = 0$ i.e.~$\lambda_2 = -\lambda_1$ and $\lambda_1$ is positive. Since $a_1 = - a_2$, we have $-2\lambda_1 a_2 = \lambda_1 a_1 + \lambda_2 a_2 \leq 0$, giving $a_2 \geq 0$.

  Now consider the case when $m > 2$. If $a_m \geq 0$, we pick $i = m$ and we are done. So assume that $a_m < 0$. Since $\lambda_{m-1} > \lambda_m$, we have that $\lambda_{m-1} a_m < \lambda_m a_m$. Then,
  $$0 \geq \sum_{i=1}^m \lambda_i a_{i} \geq \lambda_1 a_1 + \ldots + \lambda_{m-2} a_{m-2} + \lambda_{m-1}( a_{m-1} + a_m).$$
  Applying induction hypothesis to $(\lambda_1,\dots,\lambda_{m-1}), (a_1,\dots,a_{m-2},a_{m-1}+a_m)$, we get the claim. 
\end{proof}

\subsection{General Filtrations}

Consider a general filtration of saturated holomorphic subsheaves $E = \mathcal{E}_1 \supset \mathcal{E}_2 \supset \ldots \supset \mathcal{E}_m \supset \{0\}$. Again let $\lambda_1 > \ldots > \lambda_m$ and $\mathcal{F}_i = \mathcal{E}_i/\mathcal{E}_{i+1}$. 

By regularizing the $\mathcal{E}_i$, as in \cite[Proposition 4.3]{Sib15}, we can construct a geodesic ray as in Theorem \ref{slope} on a proper modification of $X$:
\begin{theorem}\label{sheaf_slopes}
{
  There exist $w \in \mathcal{H}^{1,p}$ such that
  \begin{align*}
  \mathcal{M}(tw) = 2\pi\sum_{i=1}^m \lambda_i \mathrm{rk}(\mathcal{F}_i) (\mu_{\mathcal{F}_i} - \mu_E)t - \sum_{1 \leq i < j \leq m} B_{ji} (1 - e^{-t(\lambda_i - \lambda_j)}),
\end{align*}
where $B_{ji}$ is a non-negative integer. 
}
\end{theorem}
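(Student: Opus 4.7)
The plan is to reduce Theorem \ref{sheaf_slopes} to the subbundle case (Theorem \ref{slope}) via a smooth resolution of the filtration. First, I would invoke the Bando--Siu--Sibley regularization procedure of \cite[Proposition 4.3]{Sib15}: there is a proper bimeromorphic morphism $\pi\colon\ti X \to X$, obtained as a finite sequence of blow-ups along subschemes supported on $\bigcup_i \Sing(\mathcal{E}_i)$, such that the saturated pullbacks $\ti{\mathcal{E}}_i := (\pi^*\mathcal{E}_i)^{**} \subset \pi^*E$ form a filtration by holomorphic subbundles. Since each $\mathcal{E}_i$ is saturated in $E$, the exceptional locus lies over an analytic set $Z \subset X$ of codimension at least $2$, and $\pi$ is a biholomorphism over $X\setminus Z$ intertwining the two filtrations.

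Next, fix an auxiliary Kähler form $\omega_0$ on $\ti X$ and set $\ti\omega_\e := \pi^*\omega + \e\,\omega_0$. Applying Theorem \ref{slope} to the data $(\pi^*E, \pi^*h_0, \{\ti{\mathcal{E}}_i\}, \{\lambda_i\})$ on $(\ti X, \ti\omega_\e)$, I obtain a smooth trace-free endomorphism $\ti w \in \ti{\mathcal{H}}(\pi^*E)$, \emph{independent of $\e$}, together with the formula
\begin{equation*}
\mathcal{M}_{\ti\omega_\e}(t\ti w) = 2\pi\sum_i \lambda_i\,\mathrm{rk}(\ti{\mathcal{F}}_i)\bigl(\mu^{\ti\omega_\e}_{\ti{\mathcal{F}}_i} - \mu^{\ti\omega_\e}_{\pi^*E}\bigr)\,t - \sum_{i<j} B^\e_{ji}\bigl(1 - e^{-t(\lambda_i-\lambda_j)}\bigr),
\end{equation*}
where $\ti{\mathcal{F}}_i = \ti{\mathcal{E}}_i / \ti{\mathcal{E}}_{i+1}$. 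Under the identification $\pi^*E|_{\pi^{-1}(X\setminus Z)} \cong E|_{X\setminus Z}$, the endomorphism $\ti w$ descends to a trace-free endomorphism $w$ of $E|_{X\setminus Z}$; since $\ti w$ has eigenvalues in the fixed finite set $\{\lambda_i\}$, extending $w$ arbitrarily across $Z$ produces a bounded measurable section of $\mathrm{End}(E)$. The smoothness of $\ti w$ on $\ti X$ combined with $\mathrm{codim}(Z) \geq 2$ then gives $\dbar w \in L^p$ for the stated range of $p$, so $w \in \mathcal{H}^{1,p}$.

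To obtain the asserted formula on $X$, I would then send $\e \to 0$. By the projection formula and the fact that $\pi$ is an isomorphism away from codimension $2$,
\begin{equation*}
\int_{\ti X} c_1(\ti{\mathcal{F}}_i) \wedge (\pi^*\omega)^{n-1} = \int_X c_1(\mathcal{F}_i)\wedge\omega^{n-1},
\end{equation*}
so $\mu^{\pi^*\omega}_{\ti{\mathcal{F}}_i} = \mu^{\omega}_{\mathcal{F}_i}$; the correction terms $B^\e_{ji}$ converge to finite, non-negative constants $B_{ji}$ computed from the second fundamental forms with respect to $\pi^*\omega$. A parallel comparison---pointwise on $X\setminus Z$ for the eigenvalue integrand $f_w$, and via the projection formula for the curvature term $\int \mathrm{tr}(\Theta_0 w)\wedge\omega^{n-1}$---shows that $\mathcal{M}_\omega(tw)$ equals $\lim_{\e\to 0}\mathcal{M}_{\ti\omega_\e}(t\ti w)$, yielding the formula.

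The main obstacle is the bookkeeping in the $\e\to 0$ limit. One must verify uniform boundedness and convergence of the second-fundamental-form integrals $B^\e_{ji}$ as the metric $\ti\omega_\e$ degenerates to the semi-positive form $\pi^*\omega$, and also that the Sobolev regularity of $w$ on $X$ matches what is claimed (the apparent loss of differentiability coming entirely from the exceptional locus). Both points are controlled by the codimension-$2$ nature of the singular set together with the explicit diagonal shape of $\ti w$ provided by the subbundle construction of Section \ref{sec:filtrations}, which ensures its derivatives are smooth off the exceptional divisor and $L^p$-integrable across it.
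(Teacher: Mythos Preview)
Your overall strategy matches the paper's: resolve the filtration via \cite[Proposition~4.3]{Sib15}, apply Theorem~\ref{slope} on $\ti X$ to obtain a smooth $\ti w$, then push it down to $w$ on $X$ and identify the slopes $\mu_{\ti{\mathcal F}_i}=\mu_{\mathcal F_i}$ using that the modification is an isomorphism away from codimension~$2$.

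The one substantive difference is your $\e$-regularization. You introduce an auxiliary K\"ahler form $\omega_0$ on $\ti X$, apply Theorem~\ref{slope} with $\ti\omega_\e=\pi^*\omega+\e\,\omega_0$, and then send $\e\to0$. The paper avoids this detour entirely by observing that nothing in the proof of Theorem~\ref{slope} uses positivity of $\omega$: every integral there---the degree terms, the constants $B_{ji}$, and both summands of $\mathcal M$---depends on the reference form only through wedging with $\omega^{n-1}$, so one may work directly with the smooth semi-positive form $\pi^*\omega$ on $\ti X$ and obtain the formula at $\e=0$ immediately. This collapses your limiting argument and the attendant bookkeeping (uniform control of $B^\e_{ji}$, convergence of $\mathcal M_{\ti\omega_\e}$) into a single change-of-variables $\mathcal M_{\pi^*\omega}(t\ti w)=\mathcal M_\omega(tw)$. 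Your route is not incorrect, just longer; the paper's shortcut is worth internalizing.

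On regularity: the paper in fact asserts $w\in\mathcal H^{1,\infty}$, appealing to the smoothness of $\ti w$ and the compatibility of pushforward with $d$, whereas you claim only $w\in\mathcal H^{1,p}$. Either suffices for the statement as written.
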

Note that Theorem A is a direct corollary of this result.
\begin{proof}
{
  Using \cite[Proposition 4.3]{Sib15}, we can find a proper modification $f: \tilde{X} \to X$ such that the saturation of $f^{*}\mathcal{E}_i$ in $f^{*}E$ is a sub-bundle for all $i$. Let $E_i$ denote the saturation of $f^{*}\mathcal{E}_i$ in $f^{*}E$ and let $F_i$ denote the quotient bundle $f^{*}E/E_i$. Applying the construction of Theorem \ref{thmMAlongGeodesic}, we can find a smooth endomorphism $\tilde{w} \in \mathcal{H}_{\tilde{X}}$ such that
  \begin{align*}
  \mathcal{M}_{\tilde{X}}(t\tilde{w}) = 2\pi\sum_{i=1}^m \lambda_i \mathrm{rk}(F_i) (\mu_{F_i} - \mu_E)t - \sum_{1 \leq i < j \leq m} B_{ji} (1 - e^{-t(\lambda_i - \lambda_j)}).
  \end{align*}
  where $\mathcal{M}_{\tilde{X}}$ is the Donaldson functional on $\tilde{X}$ with respect to the reference metric $f^{*}h_0$ and the form $f^{*}\omega$. Note that the form $f^{*}\omega$ is no longer a K\"ahler form on $\tilde{X}$; however we do not require this in the construction done in Theorem \ref{thmMAlongGeodesic}. 

  Since $f$ is smooth and the pushforward on currents commutes with $d$, we see that $w := f_*\ti{w} \in \mathcal{H}^{1,\infty}$.
  To finish the proof, it is enough to show that $\mathcal{M}_{\tilde{X}}(t\tilde{w}) = \mathcal{M}(tw)$ and that $\mu_{F_i} = \mu_{\mathcal{F}_i}$. The first part follows from applying the change of variables formula to the definition of $\mathcal{M}_{\tilde{X}}$ and $\mathcal{M}$.
  To show that $\mu_{F_i} = \mu_{\mathcal{F}_i}$, it is enough to check that $\mu_{E_i} = \mu_{\mathcal{E}_i}$. Since $\mathcal{E}_i$ is torsion-free, $f^{*}\mathcal{E}_i$ and its saturation can only differ along a submanifold of codimension at least 2. Thus $f^{*}\det(\mathcal{E}_i)$ and $\det(E_i)$ are line bundles that differ along a submanifold of codimension at least 2 and therefore must be isomorphic. We have
  \begin{align*}
\mu_{\mathcal{E}_i} = \frac{\deg_{\omega}(\det(\mathcal{E}_i))}{\mathrm{rk}(\mathcal{E}_i)} = \frac{\deg_{f^{*}\omega}(\det (E_{i}))}{\mathrm{rk}(E_i)} = \mu_{E_i}. 
  \end{align*}
}
\end{proof}

The following result follows from Corollary \ref{corNegSlope}. 
\begin{proposition}
Let $w$ be as constructed in Theorem \ref{sheaf_slopes}. If we further have that $\mathcal{M}(tw) \leq 0$ for all $t \geq 0$, then there exists $2 \leq i \leq m$ such that $\mu_{\mathcal{E}_i} \geq \mu_E$. \qed
\end{proposition}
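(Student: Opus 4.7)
The plan is to read the statement directly off the explicit formula in Theorem \ref{sheaf_slopes} and then invoke Lemma \ref{lemAlphaSumPositive}, mirroring the argument used for Corollary \ref{corNegSlope}. First I would divide the formula
\[
\mathcal{M}(tw) = 2\pi\sum_{i=1}^m \lambda_i\,\mathrm{rk}(\mathcal{F}_i)(\mu_{\mathcal{F}_i} - \mu_E)\,t - \sum_{1 \le i < j \le m} B_{ji}\bigl(1 - e^{-t(\lambda_i - \lambda_j)}\bigr)
\]
by $t$ and let $t \to \infty$. Since $\lambda_i - \lambda_j > 0$ for $i < j$ and the $B_{ji}$ are fixed non-negative constants, the second sum lies in the bounded interval $\bigl[0,\sum_{i<j} B_{ji}\bigr]$ for all $t \ge 0$. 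Hence the hypothesis $\mathcal{M}(tw) \le 0$ for all $t \ge 0$ forces the asymptotic slope to be non-positive, namely
\[
\sum_{i=1}^m \lambda_i\,\mathrm{rk}(\mathcal{F}_i)(\mu_{\mathcal{F}_i} - \mu_E) \le 0.
\]

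Next I would set $a_i := \mathrm{rk}(\mathcal{F}_i)(\mu_{\mathcal{F}_i} - \mu_E)$ and check the hypotheses of Lemma \ref{lemAlphaSumPositive}. One has $\sum_i a_i = 0$: the ranks $\mathrm{rk}(\mathcal{F}_i)$ sum to $\mathrm{rk}(E)$, while the degrees of the graded pieces $\mathcal{F}_i = \mathcal{E}_i/\mathcal{E}_{i+1}$ sum to $\deg(E) = \mu_E\,\mathrm{rk}(E)$. Combined with $\lambda_1 > \ldots > \lambda_m$ and the inequality above, Lemma \ref{lemAlphaSumPositive} yields an index $2 \le i \le m$ for which $a_i + \ldots + a_m \ge 0$.

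Finally I would unwind this inequality. Rearranging,
\[
\deg(\mathcal{E}_i) = \sum_{j=i}^{m} \mathrm{rk}(\mathcal{F}_j)\mu_{\mathcal{F}_j} \ge \mu_E \sum_{j=i}^m \mathrm{rk}(\mathcal{F}_j) = \mu_E\,\mathrm{rk}(\mathcal{E}_i),
\]
which is the claim $\mu_{\mathcal{E}_i} \ge \mu_E$.

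There is no substantive obstacle in the argument: once one observes that the exponential correction in Theorem \ref{sheaf_slopes} is uniformly bounded in $t$ and therefore does not contribute to the asymptotic slope, the proposition is a direct repackaging of Corollary \ref{corNegSlope} applied to the filtration data $(\lambda_i, \mathcal{F}_i)$, with the proof of Theorem \ref{sheaf_slopes} providing the identification $\mu_{\mathcal{F}_i} = \mu_{\mathcal{E}_i/\mathcal{E}_{i+1}}$ that makes the conversion from graded-piece slopes to subsheaf slopes legitimate.
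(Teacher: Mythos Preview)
Your proposal is correct and follows essentially the same approach as the paper: the paper simply cites Corollary \ref{corNegSlope}, and you have reproduced the proof of that corollary (asymptotic slope $\Rightarrow$ Lemma \ref{lemAlphaSumPositive} $\Rightarrow$ convert tail inequality to $\mu_{\mathcal{E}_i}\ge\mu_E$) in the sheaf setting, using the identical formula for $\mathcal{M}(tw)$ supplied by Theorem \ref{sheaf_slopes}.
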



\section{Further Properties of the Donaldson Functional}\label{sec:Don}


Throughout this section and the next, we use $C$ to represent a positive constant, whose exact value may change from line to line, but which can always be computed from fixed data.

\subsection{Reverse Sobolev Inequality}

The following estimate is essentially due to Donaldson; it immediately gives a reverse Sobolev inequality for $w \in \mathcal{S}$,
  showing that $\| w \|_{W^{1,p}}$ is equivalent to $\| w \|_{L^{p^{*}}}$ when both the norms are large.
\begin{proposition}\label{propReverseSobolev}
  Suppose that $w\in\mathcal{H}^{1,p}$. Then there exists a constant $C \geq 0$, depending only on $(X, \omega)$ and $(E, h_0)$, such that:
\[
\norm{w}_{W^{1,p}} \leq C \mathcal{M}(w) + C(\norm{w}_{L^{p^{*}}} + 1).
\]
\end{proposition}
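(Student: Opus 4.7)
The strategy has three movements: (i) reduce the Sobolev bound to a bound on $\|\overline\partial w\|_{L^p}$ plus the lower-order term $\|w\|_{L^{p^*}}$; (ii) prove a pointwise lower bound $f_w \gtrsim |\overline\partial w|^2/(1+|w|)$ extracted from the scalar function $\phi(a):=(e^a-a-1)/a^2$; (iii) recover the weight $1/(1+|w|)$ via a Hölder interpolation in which the Sobolev-conjugate exponent $p^*=p/(2-p)$ appears for exactly the right reason.

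\textbf{Reduction.} Since $X$ is compact and $p \le p^*$ (using $p>1$), one has $\|w\|_{L^p} \le C\|w\|_{L^{p^*}}$; self-adjointness of $w$ relative to $h_0$ forces $|\partial w| = |\overline\partial w|$ pointwise, so $\|\nabla w\|_{L^p}$ is equivalent to $\|\overline\partial w\|_{L^p}$. Splitting $\mathcal{M}(w) = \int_X f_w\,\omega^n + \int_X \mathrm{tr}(\Theta_0 w)\wedge\omega^{n-1}$ and bounding the linear term by $C\|w\|_{L^1} \le C\|w\|_{L^{p^*}}$ reduces the claim to showing
\[
\|\overline\partial w\|_{L^p} \le C\int_X f_w\,\omega^n + C(\|w\|_{L^{p^*}}+1).
\]

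\textbf{Pointwise estimate.} An elementary check shows $\phi(a) \ge c_0/(1+|a|)$ for some $c_0>0$ and all $a\in\R$: for $a\ge 0$ we have $\phi(a) \ge \phi(0)=\tfrac12$, while for $a\ll 0$ the linear term dominates and $\phi(a) \sim 1/|a|$. Diagonalizing $w$ by a unitary frame at a.e.~$z\in X$ and using $|\lambda_i-\lambda_j| \le 2|w|$ together with the identity $\sum_{i,j}|\eta^i_j|^2 = |\overline\partial w|^2$ (valid in any unitary frame), the definition of $f_w$ gives
\[
f_w \ge \frac{c_0\,|\overline\partial w|^2}{1+2|w|}\quad \text{a.e.}
\]

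\textbf{Hölder interpolation.} Write $|\overline\partial w|^p = \bigl(|\overline\partial w|^2/(1+|w|)\bigr)^{p/2}(1+|w|)^{p/2}$ and apply Hölder with dual exponents $2/p$ and $2/(2-p)$:
\[
\int_X|\overline\partial w|^p\,\omega^n \le \left(\int_X\frac{|\overline\partial w|^2}{1+|w|}\,\omega^n\right)^{p/2}\!\left(\int_X(1+|w|)^{p^*}\,\omega^n\right)^{(2-p)/2},
\]
where the identity $p/(2-p)=p^*$ is exactly what is needed, and $p^*(2-p)/2 = p/2$ balances the powers. The first factor is $\le C(\mathcal{M}(w)+\|w\|_{L^{p^*}}+1)^{p/2}$ by the reduction and the pointwise estimate (both sides are nonnegative, so all square roots below are real); the second is $\le C(\|w\|_{L^{p^*}}+1)^{p/2}$. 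Taking $p$-th roots and applying $\sqrt{AB} \le (A+B)/2$ produces the desired linear bound.

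\textbf{Main obstacle.} The delicate point is not the calculation itself but the recognition that the weight $(1+|w|)^{-1}$ lost in the pointwise lower bound on $f_w$ — forced by the decay of $\phi$ at $-\infty$ — is absorbed by a \emph{single} Hölder interpolation precisely because the conjugate exponent $p/(2-p)$ coincides with $p^*$. This matching is what allows a clean linear inequality (with $\mathcal{M}(w)$ and $\|w\|_{L^{p^*}}$ both appearing to the first power) and accounts for why $p^*$ (and thus $p_{\max}$) is the natural critical exponent in this problem.
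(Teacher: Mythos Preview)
Your proof is correct and follows essentially the same route as the paper: the elementary lower bound $\phi(a)\ge c_0/(1+|a|)$ yields $f_w\gtrsim |\overline\partial w|^2/(1+|w|)$, then the H\"older split $|\overline\partial w|^p=(|\overline\partial w|^2/(1+|w|))^{p/2}(1+|w|)^{p/2}$ with exponents $2/p$ and $2/(2-p)$ produces the $L^{p^*}$ norm of $w$, and a final AM--GM gives the linear estimate. The only cosmetic differences are your explicit remark that $|\partial w|=|\overline\partial w|$ for $h_0$-self-adjoint $w$ and your bookkeeping identity $p^*(2-p)/2=p/2$, neither of which departs from the paper's argument.
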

\begin{proof}
Recall that $\mathcal{M}(w) = \int_X f_w \omega^n + \int_X \mathrm{tr}(\Theta_0 w)\wedge\omega^{n-1}$. Therefore, 
\[
\int_X f_w \omega^n \leq \mathcal{M}(w) +  \left|\int_X \mathrm{tr}(\Theta_0 w)\wedge\omega^{n-1}\right|.
\]
Now we find a lower bound for $\int_X f_w \omega^n$. Since 
\[
\frac{e^x - 1 - x}{x^2} \geq \frac{1}{2(|x| + 1)}, 
\]
we have that
$$
\int_X f_w \omega^n \geq \frac{1}{2} \int_X \frac{\sum_{i,j}|\eta_{ij}|^{2}}{|w| + 1}\omega^{n} = \frac{1}{2} \int_X \frac{|\tr{\omega}\dbar w |^{2}}{|w| + 1}\omega^n,$$ 
where $|w|$ and $|\tr{\omega}\dbar w|$ denote the pointwise operator norms of $w$ and $\tr{\omega}\dbar w$. 
Writing $|\tr{\omega}\dbar w|^p   =\frac{|\tr{\omega}\dbar w|^p}{(|w| + 1)^{p/2}} \cdot (|w| + 1)^{p/2}$ and applying the H\"older inequality with conjugate exponents $2/p$ and $2/(2-p)$ gives: 
\[
  \| \tr{\omega}\dbar w \|_{L^p} \leq
  \left\| \frac{|\tr{\omega}\dbar w|}{\sqrt{|w| + 1}} \right\|_{L^2} 
  \cdot \left\||w| + 1 \right\|_{L^{p^{*}}}^{1/2}, 
\]
since $p^* = \frac{p}{2-p}$. It follows that
\begin{align*}
  \int_X f_w \omega^n &\geq \frac{1}{2} \frac{\| \tr{\omega}\dbar w \|^2_{L^p}}{\left\| |w| + 1 \right\|_{L^{p^{*}}}} \geq \frac{1}{2} \frac{\| \dbar w \|^2_{L^p}}{ \|w\|_{L^{p^{*}}} + C}.
\end{align*}

On the other hand, since $h_0$ is a smooth Hermitian metric, we also have that
\[
 \left| \int_X \mathrm{tr}(\Theta_0 w)\wedge\omega^{n-1} \right|   \leq C \| w \|_{L^{p^{*}}}.
\]
Combining the two inequalities above, we get that
\[
\frac{\| \dbar w \|^2_{L^p}}{\|w\|_{L^{p^{*}}} + C} \leq 2(\mathcal{M}(w) +  C\|w\|_{L^{p^{*}}}).
\]
Simplifying, we get that
\begin{align*}
  \| \dbar w \|_{L^p} &\leq \sqrt{2(\mathcal{M}(w) +  C\|w\|_{L^{p^{*}}}) (\| w \|_{L^{p^{*}}} + C)}  \\
  &\leq \frac{\mathcal{M}(w) +  C\|w\|_{L^{p^{*}}}  + C}{\sqrt 2},
\end{align*}
by the AM-GM inequality. Since $p \leq p^*$, we conclude.
\end{proof}

\subsection{Lower-semi-continuity}

\begin{proposition}\label{prop:Mlsc}
The Donaldson functional is lower-semi-continuous on $\mathcal{H}^{1,p}$ with respect to the weak $W^{1,p}$-topology, i.e. if $w_i\rightharpoonup w$ then:
\[
\mathcal{M}(w) \leq \liminf_{k\rightarrow\infty} \mathcal{M}(w_k).
\]
\end{proposition}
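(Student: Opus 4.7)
The strategy is to decompose $\mathcal{M}(w) = I(w) + II(w)$ into the nonlinear piece $I(w) = \int_X f_w\,\omega^n$ and the linear piece $II(w) = \int_X \mathrm{tr}(\Theta_0 w)\wedge\omega^{n-1}$, and handle the two separately. The linear term $II$ is a bounded linear functional on $L^{p^*}$ since $\Theta_0\wedge\omega^{n-1}$ is smooth; by the Rellich--Kondrachov theorem (applicable since $p < p_{\max}$, giving the compact embedding $W^{1,p}\hookrightarrow L^{p^*}$ recalled in Section~\ref{sec:background}), weak $W^{1,p}$-convergence $w_k\rightharpoonup w$ implies strong $L^{p^*}$-convergence, hence $II(w_k)\to II(w)$. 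The heart of the proof is thus the weak lsc of $I$.

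My plan for $I$ is to first establish strong $W^{1,p}$-lsc directly and then bootstrap to weak lsc via Mazur's lemma. For the strong case, if $w_k\to w$ strongly in $W^{1,p}$, then a subsequence satisfies $w_k\to w$ and $\bar\partial w_k\to\bar\partial w$ pointwise a.e.\ on $X$. The integrand $f_w$ depends jointly continuously on the pair $(w,\bar\partial w)$: using the identity $g(x) = \int_0^1 (1-r) e^{rx}\,dr$ for $g(x) = (e^x-x-1)/x^2$, one obtains the basis-free expression
\[
f_w\,\omega^n = n\sqrt{-1}\int_0^1(1-r)\,\mathrm{tr}\bigl(e^{rw/2}\bar\partial w\,e^{-rw/2} \wedge (e^{rw/2}\bar\partial w\,e^{-rw/2})^{*}\bigr)\wedge\omega^{n-1}\,dr,
\]
where $*$ denotes the $h_0$-adjoint; this is well-defined a.e.\ since $w$ is a.e.\ finite. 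Consequently $f_{w_k}\to f_w$ a.e., and Fatou's lemma (using $f_w\geq 0$) yields $I(w)\leq \liminf_k I(w_k)$.

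For the passage from strong to weak lsc, suppose $w_k\rightharpoonup w$ in $W^{1,p}$ and, after passing to a subsequence, $\mathcal{M}(w_k)\to L = \liminf_k\mathcal{M}(w_k)$. By Mazur's lemma, there exist finite convex combinations $\tilde w_k = \sum_{j\geq k} c_{k,j} w_j$ with $\tilde w_k\to w$ strongly in $W^{1,p}$. The Donaldson functional is convex on $\mathcal{H}^{1,p}$ along arbitrary line segments---this extends Proposition~\ref{convex} and is the classical convexity of $\mathcal{M}$ along geodesics in $\mathrm{Herm}(E)$, verified in the smooth setting by the second-derivative identity $\frac{d^2}{dt^2}\mathcal{M}((1-t)w_0+tw_1) = \int_X|\bar\partial(w_1-w_0)|^2_{h_t}\omega^n\geq 0$, and extended to $\mathcal{H}^{1,p}$ by approximation. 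Therefore $\mathcal{M}(\tilde w_k)\leq \sum_j c_{k,j}\mathcal{M}(w_j)\to L$, and combining with the strong lsc established above gives $\mathcal{M}(w)\leq \liminf_k\mathcal{M}(\tilde w_k)\leq L$.

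The main obstacle is the convexity of $\mathcal{M}$ along general (non-radial) line segments in $\mathcal{H}^{1,p}$, since Proposition~\ref{convex} addresses only the radial rays $(tw)_{t\geq 0}$; extending to arbitrary line segments requires a curvature-based second-derivative computation together with an approximation argument to reach the non-smooth class $\mathcal{H}^{1,p}$. An alternative that bypasses this issue is to apply an Ioffe-type lsc theorem directly to the Carath\'eodory integrand $F(w,\xi) := f_w(\xi)$---non-negative, convex in $\xi$, jointly continuous in $(w,\xi)$ by the displayed formula above---using the a.e.\ convergence of $w_k$ from Rellich--Kondrachov together with the weak convergence $\bar\partial w_k\rightharpoonup\bar\partial w$ in $L^p$.
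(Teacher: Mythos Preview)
Your primary route via Mazur's lemma has a genuine gap: the joint convexity of $\mathcal{M}$ on $\mathcal{H}^{1,p}$ that you invoke is neither proved in the paper (Proposition~\ref{convex} treats only radial rays $t\mapsto tw$) nor does it follow from the second-derivative identity you state. That identity holds along geodesics of $\mathrm{Herm}(E)$ for the symmetric-space metric, i.e.\ paths with $\dot h_th_t^{-1}$ constant; but for $h_t=e^{(1-t)w_0+tw_1}h_0$ one has $\dot h_th_t^{-1}=\int_0^1 e^{sw_t}(w_1-w_0)e^{-sw_t}\,ds$, which is $t$-dependent unless $w_0$ and $w_1$ commute, and the second variation then acquires an extra term $\int_X\mathrm{tr}(\dot v_t\Theta_t)\wedge\omega^{n-1}$ of indefinite sign. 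Equivalently, your own integral formula shows the pointwise integrand $F(A,B)=\int_0^1(1-r)\,|e^{rA/2}Be^{-rA/2}|^2\,dr$ is \emph{not} jointly convex in $(A,B)$: for diagonal $A=\mathrm{diag}(\lambda_1,\lambda_2)$ and off-diagonal $B$ it reduces to $|b|^2\phi(\lambda_1-\lambda_2)$ with $\phi(x)=(e^x-x-1)/x^2$, and a direct check at $x=0$ gives $\phi\phi''=\tfrac{1}{24}<\tfrac{1}{18}=2(\phi')^2$, so the Hessian of $(x,b)\mapsto b^2\phi(x)$ is indefinite. The proposed ``extension by approximation'' is moreover circular: passing to the limit on the right-hand side of the convexity inequality would require $\mathcal{M}(w_i^{(n)})\to\mathcal{M}(w_i)$ along a strong $W^{1,p}$-approximation, i.e.\ continuity of $\mathcal{M}$, which is strictly stronger than what you are proving.

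Your Ioffe-type alternative, on the other hand, is correct and constitutes a genuinely different and more economical proof than the paper's. The key input---that for fixed $w$ the map $\xi\mapsto f_w(\xi)$ is a nonnegative quadratic form, which your basis-free formula makes transparent---is exactly the convexity-in-the-gradient-variable hypothesis of Ioffe's theorem; together with $w_k\to w$ in measure (Rellich--Kondrachov) and $\bar\partial w_k\rightharpoonup\bar\partial w$ weakly in $L^p$ (hence weakly in $L^1$, the sequence being $L^p$-bounded and thus equi-integrable), this yields $\int_X f_w\,\omega^n\le\liminf_k\int_X f_{w_k}\,\omega^n$ in one stroke. The paper instead argues entirely by hand: it localizes, stratifies $X$ by the multiplicity pattern of the eigenvalues of $w$, shows that the unitary change-of-basis matrices diagonalizing $w_k$ converge block-diagonally (Lemma~\ref{lemPointwiseConvergenceOfDN}), proves the blockwise $L^2$-lsc bound $\liminf_k\|(\eta_k)^i_j\|_{L^2(S)}\ge\|\eta^i_j\|_{L^2(S)}$ for every measurable $S$ via an Egorov/H\"older argument, and feeds this into a tailored Fatou-type lemma (Lemma~\ref{lemFatou}). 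Your Ioffe route is shorter at the cost of importing a nontrivial external theorem; the paper's argument is longer but fully self-contained and elementary.
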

\begin{proof}
The proof will rely on several lemmas, whose proofs we provide at the end -- the idea is to establish lower semi-continuity of the $w$ and $\pbar w$ terms in $\mathcal{M}(w)$ separately, and then combine them.

We start with some setup; we can find a subsequence $\{w_{k_\ell}\}$ such that:
\[
\lim_{\ell\rightarrow\infty} \mathcal{M}(w_{k_\ell}) = \liminf_{k\rightarrow\infty} \mathcal{M}(w_k).
\]
By the Sobolev embedding theorem, $w_{k_{\ell}}$ converges to $w$ in $L^p$, so we can extract a further subsequence which converges pointwise a.e.. Hence, by relabeling, we may assume without loss of generality that $w_k\rightarrow w$ pointwise a.e, in $L^p$, and weakly in $W^{1,p}$.

Since $\Theta_0$ is smooth, $\int_X \mathrm{tr}(\Theta_0w_k)\wedge\omega^{n-1}$ is continuous in $k$, so we only need to show that:
\[
\liminf_{k\rightarrow\infty} \int_X f_{w_k}\omega^n \geq \int_X f_w \omega^n.
\]

The problem is local, so fix a trivializing open $U$ for $E$. Let $\{ e_i \}$ be a unitary basis of $E|_U$ that such that, $D$, the matrix of $w$ is diagonal with respect to $\{e_i\}$ and has decreasing entries. We also consider unitary changes of bases $A_k$ which diagonalize $w_k$, so that, in the fixed $\{e_i\}$ basis, $w_k = A_k D_k A^*_k$ for a diagonal matrix $D_k$, whose entries are organized in decreasing order.

We seek to work with pointwise values for $w$, which requires us to deal with eigenvalues with multiplicity. To this end, consider:
\[
\mathcal{R} = \left\{ (r_1,\dots,r_m) \Bigg| \ r_1,\dots,r_m \in \mathbb{Z}_{> 0} \text{ for some } m > 0,  \text{ and } \sum^m_{i=1} r_i = r   \right\},
\]
the set of all partitions of $r$. For each $\ul{r}\in\mathcal{R}$, define $U_{\ul{r}}$ to be the set of all $z\in X$ such that $w$ has eigenvalues $\lambda_{r} > \ldots > \lambda_m$, with multiplicities given by $\ul{r}$, at $z$. Then $U_{\ul{r}}$ is a measurable set and:
\[
U := \bigsqcup_{\ul{r} \in \mathcal{R}} U_{\ul{r}}.
\]

Fix $\ul{r}\in\mathcal{R}$. Then, $D = \mathrm{diag}(\lambda_1\mathrm{Id}_{r_1},\dots,\lambda_m\mathrm{Id}_{r_m})$, on $U_{\ul{r}}$ where $\lambda_1 > \ldots > \lambda_m$, and $\lambda_i: U_{\ul{r}} \to \mathbb{R}_{\geq 0}$ can vary with $z \in U_{\ul{r}}$. 

Pointwise convergence implies that $A_k D_k A^*_k \rightarrow D$. If we write:
 \[
A_k = 
\begin{pmatrix}
  (A_k)^1_1 \dots (A_k)^1_m \\
  \vdots \ddots \vdots \\
  (A_k)^m_1 \dots (A_k)_m^m
\end{pmatrix}
\]
for $r_i\times r_j$-block matrices $(A_k)^i_j$, then we can apply Lemma \ref{lemPointwiseConvergenceOfDN} below to get that $D_k \rightarrow D$ and that each of the $(A_k)_j^i$ is almost unitary, in the sense that:
\begin{itemize}
\item $(A_k)^i_i((A_k)^i_i)^{*} \to \mathrm{Id}_{r_i}$ and $((A_k)^i_i)^{*}(A_k)^i_i\to \mathrm{Id}_{r_i}$
\item $(A_k)^i_j \to 0$ for $i \neq j$.
\end{itemize}

Define $\phi(x) := \frac{e^x - 1 - x}{x^2}$, $x\in\R$, with $\phi(0) := \frac{1}{2}$. Write $\lambda_{k,1} \geq \ldots \geq \lambda_{k,r}$ for the eigenvalues of $w_k$, and define $\Lambda_{k,i,a} := \lambda_{r_1 + \ldots + r_{i-1} + a}$, for any $1\leq i\leq m$ and $1\leq a \leq r_i$. We have that $\Lambda_{k,i,a}\rightarrow \lambda_i$ from the pointwise convergence, and hence $\phi(\Lambda_{k,i,a} - \Lambda_{k,j,b}) \rightarrow\phi(\lambda_i - \lambda_j)$ as $k\rightarrow\infty$. 

Write $\pbar w_k = \eta_k$ in the $A_k$-basis; in the fixed $\{e_i\}$-basis, this becomes:
\[
\ti{\eta}_k = A_k \eta_k A^*_k, 
\]
which by Lemma \ref{lemLInftyBais} weakly converges to $\eta = \pbar w$. We now have that:
\[
f_{w_k} = \sum_{1\leq i,j\leq m} \sum_{\substack{1\leq a\leq r_i \\ 1\leq b\leq r_j}} \phi(\Lambda_{k,i,a} - \Lambda_{k,j,b}) |((\eta_k)_j^i)_b^a|^2
\]
on all of $U_{\ul{r}}$; thus, we may conclude the proof by applying Lemma \ref{lemFatou} with $Y = U_{\ul{r}}$, $A = \{ (a,b) | 1 \leq a \leq r_i, 1 \leq b \leq r_j \}$, $f_{k,\alpha} = \chi_U \cdot \phi(\Lambda_{k,i,a} - \Lambda_{k,j,b})$, $f = \chi_U \cdot \phi(\lambda_i - \lambda_j)$, $\mu_{k,\alpha} = |((\eta_k)^i_j)^a_b |^2$ and $\mu = |\eta^i_j|^2$, provided that we first establish:
\begin{equation}\label{eqnSubGoal}
\liminf_{k\rightarrow\infty} \int_S \abs{(\eta_k)_j^i}^2\omega^n \geq \int_S \abs{\eta_j^i}^2\omega^n
\end{equation}
for every $1\leq i,j \leq m$ and every measurable $S\subset U_{\ul{r}}$.

We show this by observing the the $L^2$ norm is lower semicontinuous with respect to weak $L^1$ convergence, essentially. Start by recalling that:
\[
\int_S \abs{\eta_i^j}^2\omega^n = \sup_{\substack{g\in L^2(S)\cap L^\infty(S)\\ \norm{g}_{L^2(S)} = 1}} \abs{\int_S \mathrm{tr}(\eta_j^ig^*)\omega^n },
\]
where $g$ is a $r_i\times r_j$ matrix valued function. Fix such a $g$, and note that:
\[
\int_S \mathrm{tr}((\ti{\eta}_k)_j^ig^*)\omega^n \rightarrow \int_S \mathrm{tr}(\eta_j^i g^*)\omega^n,
\]
by the weak convergence of the $\ti{\eta}_k$. 

Let $\e > 0$. Since $(A_k)^i_i((A_k)^i_i)^{*} \to \mathrm{Id}_{r_i}$ and $(A_k)^i_j \to 0$ pointwise, by Egorov's theorem, there exists a set $S_1 \subset S$ on which these convergences are uniform and the measure of $S \setminus S_1$ is as small as desired. By enlarging $S_1$ further, we may also assume that $\| g \|_{L^q(S \setminus S_1)} \leq \epsilon$, where $q := \frac{p}{p-1}$.

Since $\ti{\eta}_k = A_k \eta_k A_k^{*}$, we have
\[
  (\ti{\eta}_k)^i_j = \sum_{a,b} (A_k)^i_a (\eta_k)^a_b ((A_k)_b^j)^{*}, 
\]
we break up the integral $\int_S \mathrm{tr}((\ti{\eta}_k)^i_j g^{*}) \omega^n$ in to a sum of terms of two different types -- the first type has either $i \neq a $ or $j \neq b$, and the second has $i = a$ and $j = b$.

Consider a term of the first type, and assume without loss of generality that $i \neq a$. Then $(A_k)^i_{a} \to 0$ uniformly on $S_1$, so that $|(A_k)^i_{a}| < \frac{\epsilon}{\| g \|_{L^q(S)}}$ on $S_1$ for all $k \gg 0$. Thus, we have
\begin{align*}
  &\left|\int_S\mathrm{tr}((A_k)^i_a (\eta_k)^a_b ((A_k)_b^j)^{*} g^{*})\omega^n\right| \\
  &= \left|\int_{S_1}\mathrm{tr}((A_k)^i_a (\eta_k)^a_b ((A_k)_b^j)^{*} g^{*})\omega^n\right| 
  + \left|\int_{S \setminus S_1}\mathrm{tr}((A_k)^i_a (\eta_k)^a_b ((A_k)_b^j)^{*} g^{*})\omega^n\right| \\
  &\leq \frac{\epsilon}{\| g \|_{L^q(S)}} \int_{S_1} |\eta_k| |g| \omega^n + \int_{S \setminus S_1} |\eta_k| |g| \omega^n \\
  &\leq \frac{\epsilon}{\| g \|_{L^q(S)}} \| \dbar w_k \|_{L^p} \| g \|_{L^q(S)} + \| \dbar w_k \|_{L^p} \| g \|_{L^q(S \setminus S_1)} \\
  &\leq \epsilon \| \dbar w_k \|_{L_p}  + \epsilon \| \dbar w_k \|_{L^p} \\
   &\leq C \epsilon,
 \end{align*}
where the last line follows from the fact that $\dbar w_k \wk \dbar w$ in $L^p$ and thus $\limsup_k \| \dbar w_k \|_{L^p} < \infty $, independent of $k$.

To analyze terms of the second type, consider
\begin{align*}
  &\left|\int_S\mathrm{tr}((A_k)^i_i (\eta_k)^i_j ((A_k)_j^j)^{*} g^{*})\omega^n\right|\\
  &\leq \| g\|_{L^2(S_1)}\left( \int_{S_1} \mathrm{tr}((A_k)^i_i (\eta_n)^i_j ((A_k)_j^j)^{*}(A_k)^j_j ((\eta_k)^i_j)^{*} ((A_k)_i^i)^{*} ) \right)^{\frac{1}{2}} + \int_{S \setminus S_1} | \eta_k | |g| \omega^n.
\end{align*}
The second term is again under control by H\"older's inequality:
\[
\int_{S\setminus S_1} |\eta_k||g|\omega^n \leq \|\pbar w_k\|_{L^p}\|g\|_{L^q(S\setminus S_1)}  \leq C\e.
\]
Since $\norm{g}_{L^2(S_1)} \leq 1$, the first term can then be estimated as:
\begin{align*}
&\int_{S_1} \mathrm{tr}((\eta_k)^i_j ((A_k)_j^j)^{*}(A_k)^j_j ((\eta_k)^i_j)^{*} ((A_k)_i^i)^{*} (A_k)^i_i )\\
&\leq(1+\epsilon)^2\int_{S_1} |(\eta_k)^i_j|^2 \leq (1 + \epsilon)^2\| (\eta_k)^i_j \|_{L^2(S)}^2.
\end{align*}
since $|(A_k)^i_i((A_k)^i_i)^{*} - \mathrm{Id}_{r_i}| < \epsilon$ on $S_1$ for all $i$ and for all $k \gg 0$.

We now have control over both types of terms, so taking $k\rightarrow\infty$ gives:
\[
\left|\int_S \mathrm{tr}(\eta^i_j g^{*}) \omega^n\right| \leq (1 + \epsilon) \left( \liminf_k \| (\eta_k)^i_j \|_{L^2(S)} \right) + C\epsilon.
\]
Using $\| \eta^i_j \|_{L^2(S)} = \sup_{g \in L^2(S) \cap L^{\infty}(S), \| g \|_{L^2(S) = 1}} |\int_S \mathrm{tr}(\eta^i_j g^{*})\omega^n|$, we get that
\[
\| \eta^i_j \|_{L^2(S)} \leq (1+\epsilon) \left( \liminf_k \| (\eta_k)^i_j \|_{L^2(S)} \right) + C\epsilon. 
\]
Taking $\epsilon \to 0$, we get \eqref{eqnSubGoal}, as desired.

\end{proof}

\begin{lemma}
\label{lemLInftyBais}
Let $E|_{U}$ be a trivializable vector bundle. Consider an $L^\infty$ unitary basis $\{ e_i \}$ of $E|_{U}$ i.e.~a unitary basis of $E$ obtained by applying an $L^\infty$ unitary matrix to a smooth unitary basis of $E$. Then, $T_k \wk T$ in $L^p$ for $T_k, T \in \mathrm{End}(E|_U)$ if and only if the same holds for their matrix representatives with respect to the basis $\{ e_i \}$.
\end{lemma}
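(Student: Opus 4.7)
The plan is to prove this by a direct test-function argument, reducing everything to the fact that multiplication by an $L^\infty$ unitary matrix preserves the dual space $L^q$, $q := \frac{p}{p-1}$, under the natural trace pairing
\[
(T, S) \longmapsto \int_U \mathrm{tr}(T S^*)\,\omega^n.
\]

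First I would fix a smooth unitary basis $\{f_i\}$ of $E|_U$ (which exists after shrinking $U$ if necessary) and write $\{e_i\}$ as $e_i = \sum_j A_i^j f_j$ for an $L^\infty$ unitary matrix $A = (A^j_i)$. If $\tilde{T}$ and $\hat{T}$ denote the matrix representatives of $T \in \mathrm{End}(E|_U)$ with respect to $\{f_i\}$ and $\{e_i\}$ respectively, then a direct computation gives $\hat{T} = A^{*}\tilde{T}A$. Since weak $L^p$ convergence of endomorphism-valued functions is intrinsically defined through the pairing above (equivalently, via matrix representatives in any fixed smooth unitary basis such as $\{f_i\}$), the content of the lemma reduces to showing that $\tilde{T}_k \wk \tilde{T}$ in $L^p$ if and only if $A^{*}\tilde{T}_k A \wk A^{*}\tilde{T}A$ in $L^p$.

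For the forward direction, pick any matrix-valued test function $S \in L^q(U)$ and compute
\[
\int_U \mathrm{tr}(A^{*}\tilde{T}_k A\cdot S^{*})\,\omega^n = \int_U \mathrm{tr}(\tilde{T}_k \cdot A S^{*} A^{*})\,\omega^n.
\]
Because $A$ is unitary a.e.\ and in $L^\infty$, the matrix-valued function $A S^{*} A^{*}$ again lies in $L^q(U)$, with the same $L^q$-norm as $S$. Applying the assumed weak convergence $\tilde{T}_k \wk \tilde{T}$ to the fixed test function $AS^{*}A^{*}$ shows the right-hand side converges to $\int_U \mathrm{tr}(\tilde{T}\cdot A S^{*} A^{*})\,\omega^n = \int_U \mathrm{tr}(A^{*}\tilde{T}A\cdot S^{*})\,\omega^n$, as desired. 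The reverse direction is entirely symmetric: conjugate by $A$ (instead of $A^{*}$), using that $A^{*}$ is also $L^\infty$ unitary.

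This is a routine change-of-basis verification and I do not foresee any obstacle: the only thing being used is that conjugation by an $L^\infty$ unitary is an isometry on $L^q$, which follows immediately from the pointwise unitary invariance of the Hilbert–Schmidt (or any unitarily invariant) matrix norm combined with $\|A\|_{L^\infty} \le 1$.
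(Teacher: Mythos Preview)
Your proof is correct and takes essentially the same approach as the paper: reduce to matrix representatives in a smooth unitary frame and then observe that conjugation by an $L^\infty$ (unitary) matrix preserves weak $L^p$ convergence. The paper's proof is more terse, asserting the last step simply because $A\in L^\infty$, whereas you spell out the test-function argument explicitly.
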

\begin{proof}
  Note that $T_k \wk T$ if and only if the same holds true for their matrix representatives with respect to a smooth unitary frame of $E$. By abuse of notation, denote these matrices as $T_k, T$ and let $A$ denote the change of basis from this smooth unitary frame to $\{ e_i \}$. Then, we need to show that $T_k \wk T$ if and only if $AT_k A^* \wk ATA^*$. But this is true as $A \in L^\infty$.
\end{proof}

\begin{lemma}
  \label{lemPointwiseConvergenceOfDN}
  Let $r = \sum_{i=1}^m r_i$ and write $B_i^j$ for the (ordered) $r_i\times r_j$-blocks of any $B\in M_{r\times r}(\C)$, as usual. Assume that $D_k,D \in M_{r \times r}(\C)$ and $A_k \in U(r)$ are such that $D_k, D$ are diagonal, with their elements arranged in monotonically decreasing order, and $D = \mathrm{diag}(\lambda_1\mathrm{Id}_{r_1},\dots,\lambda_m\mathrm{Id}_{r_m})$ for some $\lambda_1 > \dots > \lambda_m$. Assume further that $A_kD_kA_k^{*} \to D$ as $k \to \infty$. Then,
  \begin{itemize}
\item $D_k \to D$,
\item $(A_k)_i^i(A_k^*)_i^i \to \mathrm{Id}_{r_i}$ and $(A_k^*)^i_i(A_k)_i^i \to \mathrm{Id}_{r_i}$, and
\item $(A_k)^j_i \to 0$ for $i \neq j$,
\end{itemize}
as $k \to \infty$. 
\end{lemma}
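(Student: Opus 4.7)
The plan is to combine compactness of $U(r)$ with the spectral theorem for Hermitian matrices. Because $D_k$ and $D$ have real, decreasingly ordered diagonal entries (as diagonalizations of self-adjoint $w_k$ and $w$), continuity of eigenvalues of Hermitian matrices applied to the convergence $A_k D_k A_k^* \to D$ immediately yields $D_k \to D$: the diagonal entries of $D_k$ are the eigenvalues of $A_k D_k A_k^*$, which converge (in decreasing order) to those of $D$.

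For the block statements, I would argue by subsequences. Since $U(r)$ is compact, any subsequence of $\{A_k\}$ has a further subsequence $A_{k_\ell} \to A \in U(r)$. Passing to the limit in $A_{k_\ell} D_{k_\ell} A_{k_\ell}^* \to D$ and using $D_{k_\ell} \to D$ from the previous step gives $A D A^* = D$, i.e.\ $AD = DA$. Since $D = \mathrm{diag}(\lambda_1 \mathrm{Id}_{r_1}, \ldots, \lambda_m \mathrm{Id}_{r_m})$ with the $\lambda_i$ pairwise distinct, the commutant of $D$ in $M_{r\times r}(\C)$ consists exactly of block-diagonal matrices (conforming to the partition $r = r_1 + \cdots + r_m$). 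Thus $A_i^j = 0$ for $i \neq j$, and each diagonal block $A_i^i$ is a unitary $r_i \times r_i$ matrix (since the block-diagonal matrix $A$ is itself unitary).

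To conclude the full-sequence convergence, I would run the standard ``every subsequence has a further subsequence converging to the same limit'' argument: given any subsequence of $\{A_k\}$, extract a further subsequence along which the preceding analysis applies, producing some block-diagonal unitary limit $A$ (which may depend on the subsequence). Then along this further subsequence, $(A_k)_i^j \to 0 = A_i^j$ for $i \neq j$, and $(A_k)_i^i ((A_k)_i^i)^* \to A_i^i (A_i^i)^* = \mathrm{Id}_{r_i}$, with the analogous statement on the other side. Since every subsequence admits a further subsequence converging to the same fixed limits ($0$ or $\mathrm{Id}_{r_i}$, which do not depend on the choices), the full sequences converge to these limits, giving the three bulleted claims.

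The only real subtlety is the commutant computation in step~2, but this is elementary: writing out $AD = DA$ block by block gives $\lambda_j A_i^j = \lambda_i A_i^j$, forcing $A_i^j = 0$ whenever $\lambda_i \neq \lambda_j$. No step looks hard; the main thing to be careful about is that the subsequential limit $A$ is not unique, which is why the conclusion must be phrased in terms of the block quantities $(A_k)_i^i ((A_k)_i^i)^*$ (invariant under right-multiplication of $A_i^i$ by a unitary) rather than $A_k$ itself.
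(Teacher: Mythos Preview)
Your argument is correct and complete. It differs from the paper's proof in a pleasant way: the paper works directly with the block entries, showing $A_kDA_k^*\to D$ and then using positivity of the diagonal entries of $(A_k)_1^\ell(A_k^*)_1^\ell$ together with $\lambda_1-\lambda_\ell>0$ to peel off the first block row, followed by induction on $m$. You instead exploit compactness of $U(r)$ to pass to a subsequential limit $A$, identify $A$ as block-diagonal via the commutant of $D$, and then run the ``every subsequence has a further subsequence'' trick, correctly noting that while $A$ itself is not unique, the quantities $(A_k)_i^j$ for $i\ne j$ and $(A_k)_i^i((A_k)_i^i)^*$ do have subsequence-independent limits. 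Your route is shorter and more conceptual; the paper's is more explicit and avoids the appeal to compactness, but neither approach offers anything the other lacks for the purposes of this lemma.
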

\begin{proof}
  The first part follows from the fact that eigenvalues are a continuous function of the entries of a matrix and that $A_k^{*}D_kA_k$ and $D_k$ have the same eigenvalues.

Since $(D_k - D) \to 0$ and the entries of $A_k$ and $A_k^{*}$ are uniformly bounded, we also get that $A_k(D_k-D)A_k^{*} \to 0$. Thus, we see that $A_kDA_k^{*} \to D$, i.e.
  \[
    \sum_\ell \lambda_i (A_k)_i^\ell(A_k^*)_j^\ell \to 
\begin{cases}
  0 &\text{ if } i \neq j \\
  \lambda_i \mathrm{Id}_{r_{i}} &\text{ if } i = j
\end{cases}.
\]
In particular, picking $i = j = 1$, we get that
\begin{align*}
  \sum_\ell \lambda_{\ell} (A_k)_1^\ell (A_k^*)_1^\ell &\to \lambda_1 \mathrm{Id}_{r_1} \\ 
  \sum_\ell (\lambda_1-\lambda_\ell)(A_k)_1^\ell(A_k^*)_1^\ell &\to 0.  
\end{align*}
Since $A_k$ is unitary, we have that $\sum_\ell (A_k)_i^\ell(A_k^*)_j^\ell = 0$ if $i \neq j$ and $\sum_\ell (A_k)_i^\ell(A_k^*)_i^\ell = \mathrm{Id}_{r_i}$. Since $(\lambda_1 - \lambda_\ell) > 0$ for $\ell \geq 2$ and the diagonal entries of $(A_k)_1^\ell(A_k^*)_1^\ell$ are non-negative, we get that the diagonal entries of $(A_k)_1^\ell(A_k^*)_1^\ell$ converge to 0 for all $\ell \geq 2$ as $k \to \infty$, which implies that $(A_k)_1^\ell \to 0$ for $\ell \geq 2$. Now using $\sum_\ell (A_k)_i^\ell(A_k^*)_i^\ell = \mathrm{Id}_{r_i}$, we also get that $(A_k)_1^1(A_k^*)_1^1 \to \mathrm{Id}_{r_1}$. Applying induction to
\[
  A = 
\begin{pmatrix}
  A_{2}^1 & \dots & A_{2}^m \\
  \vdots & \ddots & \vdots \\
  A_{m}^1 & \dots & A_{m}^m
\end{pmatrix}
\]
and $D = \mathrm{diag}(\lambda_2\mathrm{Id}_{r_2},\dots,\lambda_m\mathrm{Id}_{r_m})$, we get the required result. 
\end{proof}

\begin{lemma}
\label{lemFatou}
Let $A$ be a finite set.  For $k \in \mathbb{N}$ and for  $\alpha \in A$, let $\mu_{k,\alpha}, \mu$ be positive Radon measures on a measure set $Y$ such that
  \[  \liminf_{k\rightarrow\infty} \left( \sum_{\alpha \in A}\mu_{k,\alpha}(S) \right) \geq \mu(S) \]
for all measurable $S \subset Y$. Let $f_{k,\alpha} , f$ be non-negative measurable functions on $U$ such that $f_{k,\alpha} \to f$ pointwise a.e. on $Y$ as $k \to \infty$ for all $\alpha \in A$. Then,
\[
\liminf_{k\rightarrow\infty} \left( \int_{Y} \sum_\alpha f_{k,\alpha} \mu_{k,\alpha} \right) \geq \int_Y f \mu
\]
\end{lemma}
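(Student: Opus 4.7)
This is a Fatou-type lemma combining pointwise convergence of the integrands $f_{k,\alpha}\to f$ with a set-wise $\liminf$ inequality on the measures $\sum_\alpha\mu_{k,\alpha}$, so the strategy is to reduce to simple test functions $\phi\leq f$ and invoke Egorov's theorem to bridge the two types of information. In the intended application the measures are all finite (we will be on $Y=U_{\ul r}\subset X$ compact, with densities coming from $L^{p/2}$ functions against $\omega^n$), so I will assume $\mu(Y)<\infty$ throughout; the general $\sigma$-finite Radon case reduces to this by exhausting with compacts. By monotone convergence applied to the increasing supremum of simple functions below $f$, it suffices to show
\[
\liminf_{k\to\infty}\sum_{\alpha}\int_Y f_{k,\alpha}\,d\mu_{k,\alpha}\;\geq\;\int_Y \phi\,d\mu
\]
for every non-negative simple $\phi=\sum_{i=1}^N c_i\chi_{S_i}\leq f$ with disjoint measurable $S_i\subset Y$ of finite $\mu$-measure and $c_i>0$.

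Fix such a $\phi$ and $\epsilon>0$. Since $A$ is finite, Egorov's theorem applied to each of the pointwise convergences $f_{k,\alpha}\to f$ and then intersected over $\alpha\in A$ yields a measurable $Y_\epsilon\subset Y$ with $\mu(Y\setminus Y_\epsilon)<\epsilon$ on which $f_{k,\alpha}\to f$ uniformly for every $\alpha$. Choosing $k_0$ so that $f_{k,\alpha}\geq f-\epsilon$ on $Y_\epsilon$ for all $\alpha\in A$ and all $k\geq k_0$, we have $f_{k,\alpha}\geq \max(c_i-\epsilon,0)$ on $Y_\epsilon\cap S_i$, whence
\[
\sum_{\alpha}\int_Y f_{k,\alpha}\,d\mu_{k,\alpha}\;\geq\;\sum_{i=1}^N\max(c_i-\epsilon,0)\sum_{\alpha\in A}\mu_{k,\alpha}(Y_\epsilon\cap S_i).
\]
Passing to $\liminf_k$ on both sides --- which distributes over the finite outer sum because the coefficients $\max(c_i-\epsilon,0)$ are non-negative --- and applying the hypothesis to each measurable set $Y_\epsilon\cap S_i$ gives
\[
\liminf_{k\to\infty}\sum_{\alpha}\int_Y f_{k,\alpha}\,d\mu_{k,\alpha}\;\geq\;\sum_{i=1}^N\max(c_i-\epsilon,0)\,\mu(Y_\epsilon\cap S_i).
\]

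Finally, letting $\epsilon\to 0$: since $\mu(Y\setminus Y_\epsilon)<\epsilon$ and each $\mu(S_i)<\infty$, we have $\mu(Y_\epsilon\cap S_i)\to\mu(S_i)$, while $\max(c_i-\epsilon,0)\to c_i$, so the right-hand side tends to $\sum_i c_i\mu(S_i)=\int_Y\phi\,d\mu$, completing the reduction. There is no deep obstacle here; the only formal care needed is the termwise passage to $\liminf$ in the second display, which is justified because the two index sets $A$ and $\{1,\dots,N\}$ are both finite and the surviving coefficients are non-negative (so no cancellation can occur). Conceptually, the Egorov step is the heart of the argument: it converts the function-level hypothesis on $f_{k,\alpha}$ into a pure statement about the $\mu_{k,\alpha}$-measure of the sets $Y_\epsilon\cap S_i$, to which the given set-wise $\liminf$ inequality applies directly.
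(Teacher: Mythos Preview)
Your proof is correct and follows essentially the same route as the paper: reduce to a simple function $\phi\le f$, use an Egorov-type argument to pass from pointwise convergence of the $f_{k,\alpha}$ to a set where $f_{k,\alpha}\ge(1-\epsilon)\phi$ (or $\ge\phi-\epsilon$), and then invoke the set-wise $\liminf$ hypothesis on the measures. The only cosmetic difference is that the paper carries out the Egorov step by hand via the increasing sets $S_k=\{x:f_{\ell,\alpha}(x)\ge(1-\epsilon)\phi(x)\text{ for all }\ell\ge k,\alpha\}$, which only requires $\mu(Y_i)<\infty$ on the level sets of $\phi$ rather than your global assumption $\mu(Y)<\infty$; but as you note this is irrelevant for the intended application.
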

\begin{proof}
  Let $Y_i$ be measurable sets and $\phi = \sum_{i=0}^m a_i I_{Y_i}$ for $0< a_0 < \dots < a_m < \infty$ be a simple function such that $\phi \leq f$ and $\sum_i \mu(Y_i) < \infty$. It is enough to prove that for all such $\phi$, we have $\liminf_k \int_X \sum_{\alpha} f_{k,\alpha} \mu_{k,\alpha} \geq \int_X \phi \mu$.

 Fix $\epsilon > 0$. Let $S_k = \{x \in X \mid f_{\ell,\alpha}(x) \geq (1-\epsilon)\phi \text{ for all } \ell \geq k \text{ and for all } \alpha \}$. Then, $S_k$ is an increasing sequence of sets whose union is $Y$.

 Since $Y_i \subset \bigcup_k S_k$, $\mu(Y_i \cap S_k) \to \mu(Y_i)$. Thus, there exits an $k_0$ such that $\mu(Y_i \cap S_{k_0}) \geq \mu(Y_i) - \frac{1}{2}\epsilon$ for all $i = 0,\dots,m$. By hypothesis, there exists a $\ell_0$ such that
 $$\sum_{\alpha}\mu_{\ell,\alpha}(Y_i \cap S_{k_0}) \geq \mu(Y_i \cap S_{k_0}) - \frac{1}{2}\epsilon \geq \mu(Y_i) - \epsilon$$ for all $\ell \geq \ell_0$ and for all $i$. Since $\sum_{\alpha}\mu_{\ell,\alpha}(Y_i \cap S_\ell) \geq \sum_{\alpha}\mu_{\ell,\alpha}(Y_i \cap S_{k_0}) $ for $\ell \geq k_0$, we get that $\sum_{\alpha}\mu_{\ell,\alpha}(Y_i \cap S_\ell) \geq \mu(Y_i) - \epsilon$ for $\ell \gg 0$.

  Thus, for $k\gg 0$, we get, 
\begin{align*}
  \int_Y \sum_{\alpha} f_{k,\alpha} \mu_{k,\alpha} &\geq \int_{S_k} \sum_{\alpha} f_{k,\alpha} \mu_{k,\alpha}\\
    &\geq (1-\epsilon) \int_{S_k} \phi \sum_{\alpha}\mu_{k,\alpha} \\
     &= (1-\epsilon) \sum_{i=0}^m a_i \left( \sum_{\alpha} \mu_{k,\alpha}(Y_i \cap S_k) \right) \\
    &\geq (1-\epsilon)\sum_{i=0}^m a_i (\mu(Y_i) - \epsilon) \\
  &= (1-\epsilon)\left( \int_S \phi \mu \right) - \epsilon(1-\epsilon)\sum_ia_i.
\end{align*}
Letting $\epsilon \to 0$, we get the required result. 
\end{proof}


\section{Proofs of Theorems~B and~C}\label{sec:final}


We are now ready to prove the remaining results in the Introduction.
\subsection{Proof of Theorem~B}
We must show that if $\mathcal{M}$ fails to be proper on $\mathcal{H}^{1,p}$, then there exists  some $w\in\mathcal{H}^{1,p}$ such that
$\mathcal{M}(tw)\le0$ for all $t\ge0$.

  By assumption, there exists a sequence $w_k \in \mathcal{H}^{1,p}$ such that
  \[
    \mathcal{M}(w_k) < k^{-1}\| w_k \|_{W^{1,p}} - k. 
  \]
  We claim that $\| w_k \|_{W^{1,p}} \to \infty$ as $k\rightarrow\infty$. Recall that
  \[
    \mathcal{M}(w_k) \geq -C \norm{w_k}_{L^1} \geq -C\| w_k \|_{W^{1,p}},
  \]
  for some constant $C> 0$ depending only on $h_0, \omega$. Combining the two bounds for $\mathcal{M}(w_k)$, we get
  \[
- C \| w_k \|_{W^{1,p}} \leq  k^{-1}\| w_k \|_{W^{1,p}} - k, 
\]
and simplifying, we see that
\[
  \| w_k \|_{W^{1,p}} \geq \frac{k}{C+k^{-1}}.
\]
In particular, we get that $\| w_k \|_{W^{1,p}} \to \infty$.

Let $\widetilde{w_k} = \frac{w_k}{\| w_k \|_{W^{1,p}}}$. The $\ti{w_k}$ are clearly bounded in $W^{1,p}$, and hence we can extract a weakly convergent subsequence, which we still denote $\ti{w}_k$. Let $w\in\mathcal{H}^{1,p}$ be the weak limit.

  We claim that $w\not= 0$. To see this, use the reverse Sobolev inequality, Proposition \ref{propReverseSobolev}, to see that:
  \[
  \norm{w_k}_{W^{1,p}} \leq C(\norm{w_k}_{L^{p^*}} + 1)
  \]
  Taking $k$ sufficiently large and then applying the usual Sobolev inequality gives:
  \[
   \frac{1}{2C} \norm{w_k}_{W^{1,p}} \leq \norm{w_k}_{L^{p^*}} \leq C \norm{w_k}_{W^{1,p}},
  \]
  and so we conclude $\frac{1}{2C} \leq \norm{\ti{w}}_{L^{p^*}} \leq C$ for all $k\gg 0$.
  
  By the Rellich--Kondrachov theorem, the embedding of $W^{1,p} \to L^{p^{*}}$ is compact (since $p^* < p'$) and we further have that $\ti{w_k} \to w$ in $L^{p^{*}}$. Thus $\| w \|_{L^{p^{*}}} = \lim_{k \to \infty} \| \ti{w}_k \|_{L^{p^{*}}} >  0$ so that $w \neq 0$. 

  We now examine the (non-trivial) geodesic ray $\{tw\}_{t\geq 0}$; fix some $t > 0$. By convexity of $\mathcal{M}$, we have that
  \begin{align*}
    \mathcal{M}(t\widetilde{w_k}) &\leq \frac{t\mathcal{M}(w_k)}{\| w_k \|_{W^{1,p}}} \leq  \frac{t}{k} - \frac{tk}{\| w_k \|_{W^{1,p}}} \leq \frac{t}{k}, 
  \end{align*}
and hence $\liminf_k \mathcal{M}(t \ti{w_k}) \leq 0$. Now by the semicontinuity of $\mathcal{M}$ (Proposition \ref{prop:Mlsc}), we get that $\mathcal{M}(tw) \leq 0$ for all $t \geq 0$.

\subsection{Proof of Theorem~C}
Now suppose we are given $w \in \mathcal{H}^{1,p}$, such that  $\mathcal{M}(tw) \leq 0$ for all $t \geq 0$. We must prove that $w$ defines a filtration of $E$ by holomorphic subsheaves, and, after resolving the filtration, $e^{tw}h_0$ pulls back to a geodesic ray of the type constructed in Theorem \ref{sheaf_slopes}. Moreover, one of these subsheaves is such that $\mu_{\mathcal{E}_i} \geq \mu_E$. 

  Recall that $\mathcal{M}(tw) = \int_X f_{tw} \omega^n + t \cdot \int_X \mathrm{tr}(\Theta_0 w)\wedge\omega^{n-1}$. Since the first term is positive, and the second term grows linearly in $t$, we have that
  \[
\int_X f_{tw} \omega^n \leq C_1t 
\]
for all $t$ and for some positive constant $C_1$. Consider a unitary basis $\{ e  \}$ with respect to which $w$ is a diagonal matrix. Let the matrix of $w$ and $\dbar w$ with respect to $\{ e \}$ be $\mathrm{diag}(\zeta_1, \zeta_2, \dots, \zeta_r)$ and $\{ \eta_a^b \}_{1 \leq a,b \leq r}$ respectively where we order the eigenvalues as $\zeta_1 \geq \zeta_2 \geq \dots \geq \zeta_r$.
Recall that
\[
f_{tw} = \sum_{1\leq a,b \leq r} | \eta_{b}^a |^2 \frac{e^{t(\zeta_i - \zeta_j)} - 1 - t(\zeta_i - \zeta_j)}{(\zeta_i-\zeta_j)^2}
\]

Since $\frac{e^x - 1 - x}{x^2} \geq \frac{1}{2}$ when $x \geq 0$, we get that
\[
f_{tw} \geq  \frac{t^2}{2} \sum_{a \leq b}  | \eta^a_b |^2. 
\]
Since $\int_X f_{tw} \omega^n < Ct$ for all $t$, we get that $\eta^a_b = 0$ almost everywhere for all $a \leq b$. 

Now we claim that the eigenvalues of $w$ are constant on $X$. To see this, consider a change of basis $A$ from a holomorphic frame of $E$ to $\{e\}$. Then,
\begin{align*}
  \eta &= A^{-1}\dbar(A \cdot D \cdot A^{-1})A \\
  \eta &= A^{-1}(\dbar A) D + \dbar D - D A^{-1} \dbar A,
\end{align*}
where $D$ denotes the diagonal matrix $\mathrm{diag}(\zeta_1,\dots,\zeta_r)$. 
Computing the $(a,b)$-th entry, we get that
  \[
    \eta^a_b =
    \begin{cases}
      (A^{-1}\dbar A)^a_b(\zeta_b - \zeta_a) &\text{ if } a \neq b \\
      \dbar \zeta_a &\text{ if }  a = b
    \end{cases}.
  \]
  Using $\dbar \zeta_a = \eta^a_a  = 0$, we get that the eigenvalues of $w$ are constant on $X$. For the remainder of the proof, let us relabel the eigenvalues of $w$ as $\lambda_1,\dots,\lambda_m$ such that the matrix of $w$ in the basis $\{ e \}$ is given by $\mathrm{diag}(\lambda_1\mathrm{Id}_{r_1},\dots,\lambda_m\mathrm{Id}_{r_m})$,  and $\lambda_1 > \dots > \lambda_m$. We also change our notation so that $\eta_j^i$ and $(A^{-1}\dbar A)^i_j$ now denote the $r_i \times r_j$ block matrices of $\eta$ and $A^{-1}\dbar A$ for $1\leq i,j \leq m$. 

  Now using $\eta^i_j = 0$ for $i < j$, we get that $(A^{-1}\dbar A)^i_j = 0$ for $i < j$. We now use this and Theorem \ref{UY} to show that projection to the sum of eigenspaces of $\lambda_s,\dots,\lambda_m$ gives rise to a holomorphic filtration of $E$. Let $\pi_s$ denote the orthogonal projection to the sum of eigenspaces of $\lambda_s,\dots,\lambda_m$ i.e.~with respect to the basis $\{e\}$, the matrix of $\pi_s$ is
  \[
\Pi_s = \mathrm{diag}(0\cdot \mathrm{Id}_{r_1+\dots+r_{s-1}}, \mathrm{Id}_{r_s+\dots+r_m}).
\]
   Let us also denote $M := A^{-1}\dbar A$ and write
   \[
     M =
     \begin{pmatrix}
       M_1 & 0 \\
       M_2 & M_3
     \end{pmatrix}      
   \]
   where $M_1$ is an $(r_1 + \dots + r_{s-1}) \times (r_1 + \dots + r_{s-1})$ block matrix and $M_3$ is an $(r_s + \dots + r_m) \times (r_s + \dots + r_m)$ block matrix.

  Firstly, we claim that $\pi_s \in \mathcal{H}^{1,2}$. It is clear that $\pi \in L^2$ as the operator norm of $\pi_s$ is $1$. To see that $\dbar \pi_s \in L^2$, the matrix of $\dbar \pi_s$ with respect to the frame $\{ e \}$ is given by
  
   \begin{align*}
     A^{-1}\dbar(A \Pi_s A^{-1}) A &= (A^{-1}\dbar A) \Pi_s - \Pi_s (A^{-1}\dbar A) \\ &=     \begin{pmatrix}
      0 & 0 \\
      M_2 & 0 \\
    \end{pmatrix}
   \end{align*}

   Since $f_w \in L^1(X)$ and since the eigenvalues of $w$ are constant, we get that $\eta_j^i$ and thus $(A^{-1}\dbar A)^i_j$ are in $L^2$. Since the entries of $\dbar \pi_s$ are either $(M_2)^i_j$ or $0$, $\dbar \pi_s \in L^2$.

   We also have that $(\mathrm{I} - \pi_s)\dbar \pi_s$ in the frame $\{ e \}$ is given by
   \[
\begin{pmatrix}
  \mathrm{Id}_{r_1+\dots+r_{s-1}} & 0 \\
  0 & 0
\end{pmatrix}
\cdot
\begin{pmatrix}
  0 & 0 \\
  M_2 & 0
\end{pmatrix} = 0
   \]

   Using Theorem \ref{UY}, we get that the image of $\pi_s$ is a subsheaf $\mathcal{E}_s \subset E$ of rank $r_s+\dots+r_m$, and we get a filtration $E = \mathcal{E}_1 \supset \dots \supset \mathcal{E}_m \supset 0$. Note that $w$ acts by $\lambda_i$ on $\mathcal{F}_i = \mathcal{E}_i/\mathcal{E}_{i+1}$ and thus is of the form constructed in Theorem \ref{sheaf_slopes}. Since $\mathcal{M}(tw) \leq 0$ for all $t \geq 0$, it follows that
   \[
     2\pi\sum_{i=1}^m \lambda_i \mathrm{rk}(\mathcal{F}_i) (\mu_{\mathcal{F}_i} - \mu_E) \leq 0.
\]
   The inequality $\mu_{\mathcal{E}_i} \geq \mu_{\mathcal{E}}$ now follows from Corollary \ref{corNegSlope}.

\printbibliography

\end{document}